\def\XXint#1#2#3{{\setbox0=\hbox{$#1{#2#3}{\int}$ }
		\vcenter{\hbox{$#2#3$ }}\kern-.6\wd0}}
\newcommand{\eps}{\varepsilon}
\theoremstyle{plain}
\newtheorem{thm}{Theorem}[section]
\newtheorem*{thm*}{Theorem}
\theoremstyle{plain}
\newtheorem{lem}[thm]{Lemma}
\newtheorem{pro}{Proposition}[section]
\theoremstyle{definition}
\newtheorem{defi}{Definition}[section]
\newtheorem{remark}{Remark}[section]
\newcommand{\mR}{\mathbb{R}}
\newcommand{\Xr}{X^R}
\newcommand{\Xl}{X^L}
\newcommand{\ro}{\mathbb{R}}
\newcommand{\wsp}{\dot W^{s,p}}
\newcommand{\mcal}[1]{\mathcal{#1}}
\newcommand{\md}[1]{\left|#1 \right|}
\newcommand{\nrm}[1]{\left|\left| #1 \right|\right|}
\newcommand{\bct}[1]{\left(#1\right)}
\newcommand{\Sb}[1]{\left \lbrace #1\right\rbrace}
\newcommand{\dsp}{\displaystyle}
\newcommand{\cT}{{\mathcal T}}
\newcommand{\R}{\mathbb{R}}
\newcommand{\mZ}{\mathbb{Z}}
\numberwithin{equation}{section} \allowdisplaybreaks
        \title[Interpolation inequalities associated with Sobolev-Coulomb spaces]{Gagliardo-Nirenberg and  Caffarelli-Kohn-Nirenberg interpolation inequalities associated with Sobolev-Coulomb spaces}
        \author[A. Mallick]{Arka Mallick}
        \address[A. Mallick]{Department of Mathematics \newline\indent
	Technion,Haifa,Israel}
\email{arkamallick02@gmail.com, arka.mallick@campus.technion.ac.il}
        \author[H.-M. Nguyen]{Hoai-Minh Nguyen}
  \address[H.-M. Nguyen]{Laboratoire Jacques Louis Lions,  \newline\indent
	Sorbonne Universit\'e \newline\indent
	4 Place Jussieu, 75252, Paris, France}
\email{hoai-minh.nguyen@sorbonne-universite.fr}
\begin{document}

\begin{abstract}  
We establish the full range Gagliardo-Nirenberg and the Caffarelli-Kohn-Nirenberg interpolation inequalities associated with Sobolev-Coulomb spaces for the (fractional) derivative $0 \le  s \le 1$.   As a result, we rediscover known Gaglairdo-Nirenberg interpolation type inequalities associated  Sobolev-Coulomb spaces which were previously established in the scale of  $H^{s}$ with $0 < s \le 1$ and extend them for the full range $W^{s, p}$ with $0\le s \le 1$ and $1 < p < + \infty$.  Using these newly established weighted inequalities, we derive a new family of one body  Hardy-Lieb-Thirring inequalities and  use it to establish   a new  family of many body Hardy-Lieb-Thirring inequalities with a strong repulsive interaction term in $L^p$ scale.
\end{abstract}

\maketitle

\noindent {\bf MSC2010}: {26D10, 26A54}\\
\noindent {\bf Keywords}: {Coulomb-Sobolev's inequality, 
Gagliardo-Nirenberg's inequality, Caffarelli-Kohn-Nirenberg inequality, Hardy-Lieb-Thirring inequality}.

\tableofcontents

\section{Introduction}

This paper is devoted to the Gagliardo-Nirenberg and the Caffarelli-Kohn-Nirenberg interpolation inequalities associated with Sobolev-Coulomb spaces for the (fractional) derivative $0 \le  s \le 1$.  These inequalities are relevant and useful in the study of  Thomas-FermiDirac-von Weizs\"acker models of density functional theory \cite{BBL81, LL05, Lieb81},   or  of Hartree-Fock theory \cite{CLL01, LS10, CDSS13},  or of Hardy-Lieb-Thirring type inequalities \cite{LS10}. 
 
The first part  of the paper is devoted to the following type inequality, for $g \in C^\infty_c(\mR^d)$,  
\begin{equation}\label{CSF1}
\nrm{g}_{L^\gamma(\ro^d)} \leq C \| g\|_{\dot W^{s, p}(\mR^d)}^{\beta_1 p} \bct{\int_{\ro^d}\int_{\ro^d} \frac{|g(x)|^q|g(y)|^q}{|x-y|^{d-\alpha}} \, dx \, dy}^{\beta_2}, 
\end{equation} 
for  $1< \gamma < + \infty$,  $1 \le p, \, q  < + \infty$, $0 \le s \le 1$, $0 < \alpha <  d$, $0 <   \beta_1, \beta_2 < + \infty $ under appropriate assumptions on these parameters, where $C$ is a positive constant independent of $g$. Here and in what follows, for any open set $\Omega \subset \mR^d$, we use the standard notation 
\begin{equation}
\nrm{g} _{\dot W^{s, p}(\Omega)} =  \left\{\begin{array}{cl} \dsp \bct{\int_{\Omega}\int_{\Omega} \frac{|g(x)-g(y)|^p}{|x-y|^{d+sp}}dxdy}^{\frac{1}{p}} & \mbox{ for } 0 < s < 1, \\[6pt]
\dsp \left( \int_{\Omega} |\nabla g (x)|^p \, dx \right)^{1/p} &  \mbox{ for } s = 1, \\[6pt]
\dsp \left( \int_{\Omega} |g (x)|^p \, dx \right)^{1/p} & \mbox{ for } s =0. 
\end{array}\right.
\end{equation}
One can easily check by scaling that \eqref{CSF1} holds only if the following two identities hold
\begin{align}\label{betaEquations}
\beta_1p+2\beta_2q =1 \quad \mbox{ and } \quad (d-sp)\beta_1+(d+\alpha)\beta_2 = d/ \gamma,
\end{align}

Mathematically, inequalities of type \eqref{CSF1} was first studied by Lions \cite{Lions81, pllCMP}. In his pioneering work,  he established 
\begin{align}\label{couSobLoc}
\nrm{g}_{L^3(\ro^3)} \leq C \nrm{\nabla g}^\frac{1}{3}_{L^2(\ro^3)} \bct{\int_{\ro^3} \int_{\ro^3} \frac{|g(x)|^2|g(y)|^2}{|x-y|}dx dy}^\frac{1}{6}  \quad  \forall g \in C_c^\infty(\ro^3).
\end{align}
in used it to study of Hartree-Fock equations. In the spirit of the famous Gagliardo-Nirenberg's inequalities \cite{Gagliardo59, Nirenberg59}, \eqref{couSobLoc} has been extended to a more general setting of the form \eqref{CSF1}.  Inequality \eqref{CSF1} is  so far only known for $p=2$ and $0 < s \le 1$. More precisely,   \eqref{CSF1} has been previously established  by Bellazzini, Frank, Visciglia \cite[Proposition 2.1]{BelFrVis} in the case $p = 2$, $q =2$, and $0 < s < 1$ (see also \cite[(21)]{LunNamPor} for the case $p=q=2$, $\alpha = d - 2 s$),  by  
Mercuri, Moroz, Van Schaftingen \cite{MerMorSch} in the case $p=2$ and $s=1$, and by Bellazzini, Ghimenti, Mercuri, Moroz, and Van Schaftingen \cite[Theorems 1.1 and 1.2]{MorSch'18} in the case $p=2$ and $0 < s < 1$.  The case $p=2$ is thus completely understood but only established very recently unless $s=0$. The case $p \neq 2$ is open {\it even} for $s=1$  and the case $s=0$ has not been considered previously.

\medskip 
The first main result of this paper is on the Gagliardo-Nirenberg interpolation type inequalities involving the Coulomb term. We have

\begin{thm}\label{thm1}
Let $d \ge 1$, $0 \le s \le 1$, $1 < \gamma < + \infty$,  $1 \leq  p, \, q < + \infty$, $0<\alpha < d$, and $0  <  \beta_1, \, \beta_2 < + \infty$. Assume \eqref{betaEquations} and the following fact
\begin{equation}\label{thm1-ass}
\beta_1 \gamma +\beta_2\gamma\geq 1. 
\end{equation}
Then \eqref{CSF1} holds for all $g \in L^1(\mR^d)$ with compact support \footnote{We use here the convention $+\infty \le + \infty$.}. 
\end{thm}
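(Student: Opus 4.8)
The plan is to prove \eqref{CSF1} by reducing it to an interpolation between two "endpoint" situations, corresponding to the two scaling identities in \eqref{betaEquations}, and then exploiting the condition \eqref{thm1-ass} to patch the non-scale-invariant regime. First I would observe that by a density argument it suffices to prove the inequality for $g \in C_c^\infty(\mR^d)$ (or Lipschitz functions with compact support), the extension to $g \in L^1$ with compact support then following by truncation and mollification, since all three quantities in \eqref{CSF1} are lower semicontinuous along such approximations. Next, using the scaling $g_\lambda(x) = g(\lambda x)$ one checks that \eqref{betaEquations} is forced, so there is genuinely a one-parameter family of admissible $(\beta_1,\beta_2)$; the role of \eqref{thm1-ass} is exactly to guarantee that $\gamma$ is large enough for the interpolation to close.

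**The core interpolation step.**

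The heart of the matter is the pointwise/integral inequality controlling $\|g\|_{L^\gamma}$ by a product of the homogeneous Sobolev seminorm and the Coulomb double integral. I would first treat the \emph{borderline} exponent $\gamma_* $ determined by $s$, $p$, $q$, $\alpha$ together with \eqref{betaEquations} and the additional constraint $\beta_1\gamma+\beta_2\gamma = 1$ — this is the "critical" Coulomb–Sobolev exponent. For this $\gamma_*$ the inequality is scale invariant and should follow from a dyadic decomposition argument: split $\mR^d$ into annuli (or use a Besov-type characterization of $\dot W^{s,p}$), estimate the Coulomb energy from below on each dyadic scale by the local $L^q$ mass squared against the local diameter, and interpolate the local $L^q$ norm between $L^1$ mass and the Gagliardo seminorm via a fractional Sobolev/Gagliardo–Nirenberg inequality on each annulus; summing over scales with Hölder in the dyadic index gives the critical inequality. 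For $s=0$ this is elementary (just Hölder between $L^p$, $L^1$ against the Coulomb term). Once the critical case is in hand, a general admissible $\gamma$ with \eqref{thm1-ass} strictly satisfied is handled by noting $\gamma > \gamma_*$ and localizing: cover the support of $g$ at a single well-chosen scale, apply the critical inequality on each piece, and then re-sum using $\ell^\gamma \hookrightarrow \ell^{\gamma_*}$ (which is precisely where $\beta_1\gamma+\beta_2\gamma \ge 1$, i.e. $\gamma \ge \gamma_*$, enters); the compact support of $g$ makes the number of pieces finite and keeps constants under control.

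**The main obstacle.**

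The step I expect to be the genuine difficulty is the critical inequality for general $p \neq 2$ and $0 < s < 1$ simultaneously: when $p = 2$ one has the Fourier/Plancherel toolkit and the known results cited (Bellazzini–Frank–Visciglia, Mercuri–Moroz–Van Schaftingen, etc.), but for $p \neq 2$ the Coulomb term — a genuinely quadratic, nonlocal object in $|g|^q$ — must be married to a Gagliardo seminorm that does not linearize nicely. The resolution I would pursue is to avoid Fourier analysis entirely and work with the layer-cake / dyadic real-variable approach sketched above, using the Hardy–Littlewood–Sobolev lower bound $\iint |g(x)|^q|g(y)|^q |x-y|^{\alpha-d}\,dx\,dy \gtrsim \sum_{j} 2^{j\alpha} \big(\int_{A_j}|g|^q\big)^2$ on a suitable dyadic family $\{A_j\}$ adapted to the super-level sets of $g$, combined with the fractional Gagliardo–Nirenberg inequality $\|g\|_{L^q(A_j)} \lesssim \|g\|_{\dot W^{s,p}}^{\theta}\|g\|_{L^1(A_j)}^{1-\theta}$ (localized, with scale-dependent constants tracked explicitly). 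Balancing the dyadic exponents against \eqref{betaEquations} is then a bookkeeping exercise, but getting the localized fractional Sobolev estimate with the correct homogeneity — and checking that the constants assemble into something finite after the $\ell^\gamma$–$\ell^{\gamma_*}$ summation — is where the real work lies; the constraint $1 \le p$ and $0 < \alpha < d$ are used exactly to keep these local estimates and the Hardy–Littlewood–Sobolev bound valid.
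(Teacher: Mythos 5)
Your overall real-variable strategy --- local estimates on a family of sets adapted to the super-level sets of $g$, a Coulomb lower bound of the form $\iint_{A\times A}|g(x)|^q|g(y)|^q|x-y|^{\alpha-d}\,dx\,dy\gtrsim \mathrm{diam}(A)^{-(d-\alpha)}\big(\int_A|g|^q\big)^2$, and a re-summation in which \eqref{thm1-ass} is exactly what legitimizes $\sum_j a_j^{\beta_1\gamma}b_j^{\beta_2\gamma}\le(\sum_j a_j)^{\beta_1\gamma}(\sum_j b_j)^{\beta_2\gamma}$ --- is the same one the paper follows (\Cref{lem-Poincare} plus the Vitali covering in \Cref{coveringLem}). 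But there is a genuine gap: this scheme only yields the weak-type estimate $t^\gamma|\{|g|>t\}|\le C\|h_1\|_{L^1}^{\beta_1\gamma}\|h_2\|_{L^1}^{\beta_2\gamma}$, i.e.\ $g\in L^\gamma_w$, and nothing in your sketch upgrades this to the strong $L^\gamma$ bound. The difficulty is that the sets $\{|g|>t\}$ for different $t$ are nested, not disjoint, so when you try to convert level-set measures into $\int|g|^\gamma$ the local energies used at different heights overlap and their sum is not controlled by the product of the total energies. The paper spends most of the proof on precisely this point: Maz'ya's truncation $g_k=\cT_k(|g|)$ at levels $10^k$, the Fefferman--Stein good-$\lambda$ inequality between the dyadic maximal and dyadic sharp maximal functions (\Cref{lem-key-thm1}), and crucially the subadditivity $\sum_{k}|g_k(x)-g_k(y)|\le|g(x)-g(y)|$, which restores disjointness of the energies across layers. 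Without a substitute for this step your argument does not close; the Fourier-based routes available for $p=2$ are exactly what you have (rightly) discarded.

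Two secondary problems. First, the reduction ``prove the critical case $\beta_1\gamma_*+\beta_2\gamma_*=1$ first, then re-sum via $\ell^\gamma\hookrightarrow\ell^{\gamma_*}$'' is not coherent: by \eqref{betaEquations}, $\gamma$ is determined by $(\beta_1,\beta_2)$, so the local estimates must be run with the weight $|B|^{-1/\gamma}$ for the target $\gamma$ itself (as in \Cref{lem-Poincare}, which treats every admissible $\gamma$ at once); no second interpolation endpoint is identified, and none is available in general. Second, the localized Gagliardo--Nirenberg inequality $\|g\|_{L^q(A_j)}\lesssim\|g\|_{\dot W^{s,p}}^{\theta}\|g\|_{L^1(A_j)}^{1-\theta}$ you invoke can fail under the hypotheses of the theorem, since $q$ may exceed the Sobolev exponent $dp/(d-sp)$ (for instance $d=2$, $s=1/2$, $p=2$, $q=10$, $\alpha=1$, $\beta_1=2/5$, $\beta_2=1/100$ satisfies \eqref{betaEquations} and \eqref{thm1-ass} with $\gamma=200/43$, while $dp/(d-sp)=4<10$). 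Fortunately that inequality is also unnecessary: only the elementary H\"older bound $\fint_B|g|\le|B|^{-1/q}\big(\int_B|g|^q\big)^{1/q}$, used in the direction of \eqref{p'care2}, is needed to couple the Coulomb term to the mean oscillation.
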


\begin{remark} \rm Condition~\eqref{thm1-ass} is optimal, see \Cref{rem-thm1-*}. 
\end{remark}

\begin{remark} \rm One can ask what happens in \Cref{thm1} if $\beta_1 = 0$ or $\beta_2 = 0$. In fact, \Cref{thm1} holds with $\beta_2 = 0$.  However, when $\beta_2 = 0$, \eqref{CSF1} is just the standard Sobolev inequality and there is nothing new. Concerning $\beta_1$, one can check that $\beta_1$ cannot be 0 since the assumption 
$$
(d-sp) \beta_1 + (d+ \alpha) \beta_2 = d/ \gamma 
$$
implies that if $\beta_1 = 0$ then
$$
\gamma \beta_2 = \frac{d}{(d+\alpha)} < 1: \mbox{ which contradicts to \eqref{thm1-ass}}. 
$$
\end{remark}

\begin{remark} \label{remark-thm1} \rm \Cref{thm1} also holds for $\gamma = 1$. In this case, since 
$$
\beta_1 \gamma + \beta_2 \gamma =1 \quad \mbox{ and } \quad 
(d-sp) \beta_1 + (d+ \alpha) \beta_2 = d, 
$$
it follows that $p = q = 1$, $s = 0$, and $\alpha = d$. Then \eqref{CSF1} is again just  the standard Gagliardo-Nirenberg inequality. 
\end{remark}

\begin{remark}  \rm \Cref{thm1} also holds for $\alpha = d$. Nevertheless, the conclusion in this case just follows from the the standard Gagliardo-Nirenberg inequalities. 
\end{remark}

\Cref{thm1} is known for the case $p=2$ with $s>0$ as mentioned above. Nevertheless, the known assumptions are stated in  a  more involved manner than condition \eqref{thm1-ass}. More precisely, when $p=2$, instead of \eqref{thm1-ass}, the conclusion of \Cref{thm1} was shown under the condition $\big($$(d+\alpha)- q(d- 2 s)\neq 0$ and \eqref{range1}$ \;  below \big)$ or $\big($$(d+\alpha)-q(d- 2s) = 0$ and \eqref{range2}$ \; below \big)$, where 
\begin{align}\label{range1}
\begin{cases}
\dsp \frac{2(\alpha+2qs)}{\alpha + 2 s} \leq \gamma \leq \frac{2d}{d-2s} &\text{ if } 2s<d \text{ and } (d+\alpha) - q(d- 2s)>0, \\[6pt]
\dsp \frac{2d}{d- 2s} \leq \gamma \leq \frac{2(\alpha+2qs)}{\alpha+2s} &\text{ if } 2s<d \text{ and } (d+\alpha) -q(d- 2s)<0, \\[6pt]
\dsp  \frac{2(\alpha+2qs)}{\alpha+2s}\leq \gamma <\infty  &\text{ if } 2s\geq d,
\end{cases}
\end{align}
and
\begin{equation}\label{range2}
\frac{\alpha(d-2 s)}{2d(\alpha+2s)} \leq \beta_1 < + \infty, \quad 0\leq \beta_2 \leq \frac{s(d-2s)}{d(\alpha+2s)}. 
\end{equation}
The proof is then given to case by case separately. One can check that  \eqref{thm1-ass} is equivalent to these conditions (see \Cref{sect-E}). In this paper, we present a new approach to obtain \Cref{thm1} in which condition \eqref{thm1-ass} appears very naturally.

\medskip  
We next extend \Cref{thm1} in the spirit of Caffarelli-Kohn-Nirenberg's inequalities. Caffarelli, Kohn, and Nirenberg  \cite{CKN} (see also \cite{CKN-original}) proved the following well-known inequality 
\begin{equation}\label{CaKN}
\|  |x|^{\tau'} g \|_{L^{\gamma'}(\R^d)} \le C \| |x|^\alpha \nabla g \|_{L^p(\R^d)}^{a} \| |x|^\beta g   \|_{L^q(\R^d)}^{(1-a)} \quad  \mbox{ for } g \in C^1_{c}(\R^d), 
\end{equation}
under appropriate assumptions of the parameters. This family of inequalities has been extended by Nguyen and Squassina \cite{HmnSqa} (see also \cite{Ng-Squ3}) for fractional Sobolev spaces where the quantity $\| |x|^\alpha \nabla u \|_{L^p(\R^d)}^p$ is replaced by 
$$
\int_{\mR^d} \int_{\mR^d} \frac{|g(x) - g(y)|^p |x|^{\alpha_1 p } |y|^{\alpha_2 p }}{|x - y|^{d + sp }} \, dx \, dy
$$
under appropriate assumptions of the parameters. Previous results can be found in \cite{AB17, FrSei, MS}.

\medskip 
In this direction, we establish our second main result of this paper on the Caffarelli-Kohn-Nirenberg interpolation inequalities associated with the Coulomb term.

\begin{thm}\label{thm2} Let $d\geq 1$, $0 \le    s  \le    1$,  $1\leq \gamma', \, p, \, q< + \infty$, $0< \alpha < d$, $0 <  \beta_1,  \, \beta_2 < + \infty$, $\tau', \alpha_{1, 1}, \alpha_{1, 2},  \, \alpha_{2, 1}, \alpha_{2, 2} \in \mR$. Set $\alpha_1 = \alpha_{1, 1} + \alpha_{1, 2}$ and $\alpha_{2} = \alpha_{2, 1} + \alpha_{2, 2}$, and   define $\sigma, \, \gamma$ by 
\begin{equation}\label{thm2-sigmagamma}
\sigma = \beta_1p \alpha_1  + \beta_2 q \alpha_2 \quad \mbox{ and } \quad (d-sp)\beta_1+(d+\alpha)\beta_2 =d/ \gamma. 
\end{equation}
Assume that 
\begin{align}\label{betaEquations-m}
\beta_1p+2\beta_2q =1, \quad \frac{1}{\gamma'}+\frac{\tau'}{d} = \frac{1}{\gamma} +\frac{\sigma}{d}, 
\end{align}
\begin{equation}\label{thm2-cd3}
\gamma \ge \gamma', \quad \gamma > 1, 
\end{equation}
and,  either 
\begin{equation}\label{thm2-cd4-1}
\beta_1 \gamma' +  \beta_2 \gamma' \ge 1, 
\end{equation}
or  
\begin{equation}\label{thm2-cd4-2}
\left( \frac{1}{p} (sp - d - \alpha_1 p)  + \frac{1}{2q}(\alpha + d + \alpha_2 q)  \neq 0 \quad \mbox{ and } \quad  \beta_1 \gamma +  \beta_2 \gamma >  1 \right). 
\end{equation}
Then, if either 
$$
 \frac{1}{\gamma'} +\frac{\tau'}{d}>0 \mbox{ and  $g\in L^1(\ro^d)$ with compact support,}
$$
or 
$$
\frac{1}{\gamma'} +\frac{\tau'}{d}<0  \mbox{ and $g\in L^1_{loc}(\mR^d)$ with $g = 0$ in a neighborhood of $0$}, 
$$
then it holds 
\begin{multline}\label{thm2-cl}
\bct{\int_{\ro^d} |g|^{\gamma'} |x|^{\tau'\gamma'}}^{\frac{1}{\gamma'}}
\leq  C \| g\|_{\dot W^{s, p, \alpha_{1, 1}, \alpha_{1, 2}}}^{p \beta_1}
\times 
\left(\int_{\mR^d}\int_{\mR^d} \frac{|g(x)|^q|g(y)|^q |x|^{\alpha_{2, 1}q} |y|^{\alpha_{2, 2}q}}{|x-y|^{d-\alpha}}  \, dx \, dy \right)^{\beta_2}, 
\end{multline}
where $C>0$ is a constant independent of $g$.
\end{thm}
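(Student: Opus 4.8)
The plan is to establish \eqref{thm2-cl} by the same mechanism that underlies \Cref{thm1}, now fed with the \emph{weighted} analogues of its two ingredients. On the one hand, the Coulomb term forces, for every ball $B(z,r)\subset\ro^d$, the Morrey-type bound obtained by restriction,
\[
\Big(\int_{B(z,r)}|g(x)|^q|x|^{\alpha_{2,1}q}\,dx\Big)\Big(\int_{B(z,r)}|g(y)|^q|y|^{\alpha_{2,2}q}\,dy\Big)\ \le\ (2r)^{d-\alpha}\,\mathcal{C}_w(g),
\]
where $\mathcal{C}_w(g)$ is the weighted Coulomb integral on the right of \eqref{thm2-cl}; on balls staying away from the origin this is a genuine weighted-Morrey estimate with exponent $(d-\alpha)/(2q)$, while on balls near the origin it is reconciled with the integrability/vanishing of $g$ encoded in the hypotheses (compact support when $1/\gamma'+\tau'/d>0$, vanishing near $0$ when $1/\gamma'+\tau'/d<0$). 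On the other hand, the fractional Caffarelli--Kohn--Nirenberg--Sobolev inequality of Nguyen and Squassina \cite{HmnSqa} bounds $\||x|^{\kappa}g\|_{L^{\ell}(\ro^d)}$ by $\|g\|_{\dot W^{s,p,\alpha_{1,1},\alpha_{1,2}}}$ for the admissible pairs $(\ell,\kappa)$ --- with critical $\ell=dp/(d-sp)$ when $sp<d$ and, thanks to the support hypothesis, all $\ell\ge p$ when $sp\ge d$. The identities \eqref{thm2-sigmagamma}, \eqref{betaEquations-m}, in particular $\sigma=\beta_1p\alpha_1+\beta_2q\alpha_2$, are exactly what make the homogeneities of $\mathcal{C}_w(g)^{\beta_2}$, of $\|g\|_{\dot W^{s,p,\alpha_{1,1},\alpha_{1,2}}}^{p\beta_1}$ and of the left side of \eqref{thm2-cl} match, so that after raising to the power $\gamma'$ the inequality becomes a weighted interpolation between a ``Sobolev endpoint'' $(\ell,\kappa)$ and the ``Coulomb/Morrey endpoint''.

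I would first treat the regime \eqref{thm2-cd4-1}, i.e. $\beta_1\gamma'+\beta_2\gamma'\ge 1$, by running the interpolation scheme of \Cref{thm1} with these weighted inputs: split $\ro^d$ into dyadic annuli $A_k=\{2^k\le|x|<2^{k+1}\}$, on which each power weight is comparable to the corresponding power of $2^k$; on $A_k$ combine the weighted-Morrey bound with the weighted Sobolev bound exactly as in the unweighted proof; and sum in $k$. The relation $\sigma=\beta_1p\alpha_1+\beta_2q\alpha_2$ makes every power of $2^k$ cancel in the summation, reducing it to $\sum_kS_k^{\beta_1\gamma'}C_k^{\beta_2\gamma'}$ with $\sum_kS_k\lesssim\|g\|_{\dot W^{s,p,\alpha_{1,1},\alpha_{1,2}}}^p$ and $\sum_kC_k\lesssim\mathcal{C}_w(g)$, which H\"older in $k$ with the conjugate exponents $1/(\beta_1\gamma')$, $1/(\beta_2\gamma')$ controls precisely when $\beta_1\gamma'+\beta_2\gamma'\ge 1$ (when this sum is $>1$ the excess is absorbed through the endpoint structure of the weighted interpolation, not by a crude third factor). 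In particular this yields the conclusion of \eqref{thm2-cl} at any exponent $\gamma$ with $\beta_1\gamma+\beta_2\gamma\ge 1$, with the weight $\sigma$ in place of $\tau'$.

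The regime \eqref{thm2-cd4-2} is then reduced to the previous one by scaling. The non-vanishing of $\tfrac1p(sp-d-\alpha_1p)+\tfrac1{2q}(\alpha+d+\alpha_2q)$ says precisely that $\|g\|_{\dot W^{s,p,\alpha_{1,1},\alpha_{1,2}}}$ and $\mathcal{C}_w(g)^{1/(2q)}$ scale with different powers of $\lambda$ under $g\mapsto g(\lambda\,\cdot)$; since both sides of \eqref{thm2-cl} are invariant under this dilation (a consequence of \eqref{thm2-sigmagamma}, \eqref{betaEquations-m}), we may normalize $\|g\|_{\dot W^{s,p,\alpha_{1,1},\alpha_{1,2}}}=\mathcal{C}_w(g)^{1/(2q)}=:M$, so that by $\beta_1p+2\beta_2q=1$ the right side of \eqref{thm2-cl} equals $M$. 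It remains to bound $\||x|^{\tau'}g\|_{L^{\gamma'}(\ro^d)}$ by $CM$, which follows by weighted H\"older interpolation between $\||x|^{\sigma}g\|_{L^{\gamma}(\ro^d)}\le CM$ --- the previous regime applied at the exponent $\gamma$, admissible since $\beta_1\gamma+\beta_2\gamma>1$ --- and a weighted Sobolev bound $\||x|^{\kappa_0}g\|_{L^{\ell_0}(\ro^d)}\le C\|g\|_{\dot W^{s,p,\alpha_{1,1},\alpha_{1,2}}}=CM$ at a suitable $\ell_0$; the conditions $\gamma\ge\gamma'$, $\gamma>1$ of \eqref{thm2-cd3} place $\gamma'$ in the resulting interpolation window (the support hypothesis being used to reach exponents below $\ell_0$ when necessary). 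Verifying that these are the correct endpoints is the weighted counterpart of the elementary equivalence of \eqref{thm1-ass} with \eqref{range1}--\eqref{range2}, a short computation with the exponent identities.

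I expect the main difficulty to be the bookkeeping for the \emph{nonlocal} seminorm with the \emph{split} weights $|x|^{\alpha_{1,1}p}|y|^{\alpha_{1,2}p}$: localizing to $A_k$, one must dominate the ``long jump'' contributions ($x\in A_k$, $|y|\not\approx 2^k$) by the global weighted seminorm without spoiling the dyadic cancellation, and at the same time absorb the cutoff errors of size $2^{-ksp}\||x|^{\alpha_1}g\|_{L^p(A_k)}^p$ via a weighted fractional Hardy/Poincar\'e inequality --- the admissible range of $(\alpha_{1,1},\alpha_{1,2})$ in \eqref{thm2-cl} being dictated by exactly these two steps. A secondary, more routine point, already present in \Cref{thm1}, is the split into $sp<d$ and $sp\ge d$ (finite versus infinite upper Sobolev exponent), which is what forces the compact-support hypothesis in \eqref{thm2-cl}.
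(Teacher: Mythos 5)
Your treatment of the regime \eqref{thm2-cd4-1} is essentially the paper's: decompose $\mR^d$ into dyadic annuli, apply the localized (and, on a single annulus, effectively unweighted) inequality of \Cref{thm1} on each annulus where the power weights are comparable to constants, and resum with the discrete inequality $\sum_k a_k^{\beta_1\gamma'}b_k^{\beta_2\gamma'}\le(\sum_k a_k)^{\beta_1\gamma'}(\sum_k b_k)^{\beta_2\gamma'}$, valid exactly when $\beta_1\gamma'+\beta_2\gamma'\ge1$. One point you gloss over: the local inequality only controls the \emph{oscillation} of $g$ on each annulus, so the series $\sum_k 2^{(\gamma'\tau'+d)k}\big|(g)_{\mcal{A}_k}\big|^{\gamma'}$ must be handled separately, by a telescoping comparison of consecutive averages driven by the sign of $\gamma'\tau'+d$. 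That is precisely where the hypotheses ``compact support'' (when $1/\gamma'+\tau'/d>0$) and ``$g=0$ near the origin'' (when $1/\gamma'+\tau'/d<0$) are used --- not the dichotomy $sp<d$ versus $sp\ge d$ that you cite.

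The genuine gap is in your reduction of the regime \eqref{thm2-cd4-2}. After normalizing both right-hand factors, you propose to bound $\||x|^{\tau'}g\|_{L^{\gamma'}}$ by a global weighted H\"older interpolation between $\||x|^{\sigma}g\|_{L^{\gamma}}$ and a weighted Sobolev bound $\||x|^{\kappa_0}g\|_{L^{\ell_0}}\lesssim\|g\|_{\dot W^{s,p,\alpha_{1,1},\alpha_{1,2}}}$. Such an interpolation $\||x|^{\tau'}g\|_{\gamma'}\le\||x|^{\sigma}g\|_{\gamma}^{\theta}\,\||x|^{\kappa_0}g\|_{\ell_0}^{1-\theta}$ forces $1/\gamma'=\theta/\gamma+(1-\theta)/\ell_0$ and $\tau'=\theta\sigma+(1-\theta)\kappa_0$, hence $1/\gamma'+\tau'/d=\theta\,(1/\gamma+\sigma/d)+(1-\theta)(1/\ell_0+\kappa_0/d)$. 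By \eqref{betaEquations-m} the target and $(\gamma,\sigma)$ lie on the same scaling line, while any admissible weighted Sobolev pair must satisfy $1/\ell_0+\kappa_0/d=1/p+(\alpha_1-s)/d$; since $1/\gamma+\sigma/d$ is (using $\beta_1p+2\beta_2q=1$) a \emph{strict} convex combination of $1/p+(\alpha_1-s)/d$ and $(d+\alpha+q\alpha_2)/(2qd)$, the very non-degeneracy condition in \eqref{thm2-cd4-2} that you invoke guarantees these scaling values differ, and collinearity then forces $\theta=1$, i.e.\ $\gamma'=\gamma$, which is excluded in this step. Compact support cannot rescue this without making $C$ depend on the support of $g$. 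The paper's Step~2 proceeds differently: it perturbs $(\beta_1,\beta_2)$ into two nearby admissible pairs $(\beta_{j,1},\beta_{j,2})$ with $\beta_{j,1}p+2\beta_{j,2}q=1$, obtains from Step~1 two \emph{product} (Coulomb--Sobolev) inequalities with parameters $(\gamma_j',\tau_j')$ whose scaling values $1/\gamma_j'+\tau_j'/d$ straddle $1/\gamma'+\tau'/d$ while $\gamma_j'>\gamma'$, and then compares $|x|^{\tau'}$ with $|x|^{\tau_j'}$ separately on $B_1$ and on $\mR^d\setminus B_1$, where the weight ratio is bounded in the needed direction. Your Step~2 needs to be replaced by an argument of this kind.
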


Here and in what follows, we denote, for $t_1, t_2 \in \mR$, $1 \le p <  + \infty$, and $0 \le s \le 1$,  
\begin{equation}
\|g \|_{\dot W^{s, p, t_1, t_2}(\mR^d)}^p =
\left\{ \begin{array}{cl}
\dsp  \int_{\mR^d} \int_{\mR^d} \frac{|g(x) - g(y)|^p |x|^{ t_1 p } |y|^{ t_2  p } }{|x - y|^{d + sp }}  \, dx \, dy & \mbox{ for } 0 < s  < 1\\[6pt]
\dsp \int_{\mR^d} |g(x)|^p |x|^{(t_1 + t_2) p} \, dx  & \mbox{ for } s=0,\\[6pt]
\dsp  \int_{\mR^d} |\nabla g(x)|^p |x|^{ (t_1 + t_2) p} \, dx & \mbox{ for } s=1.  
 \end{array}\right. 
\end{equation}

\begin{remark}\label{CKN-ineq}
The case $\alpha=d$, corresponds to the usual Caffarelli-Kohn-Nirenberg inequalites, see \cite{HmnSqa} to compare the conditions satisfied by the parameters. The study of the best constant of such inequalities are in general highly non-trivial and of great interest, see e.g.,   \cite{DolEst'12, DolEstLos} and references therein.
\end{remark}

As an application of \Cref{thm2}, we derive a new family of one body interpolation inequalities and then use it to establish a new family of many body interpolation inequalities. These are our third main results of this paper and  presented later in \Cref{sect-P}.

\medskip 
We now review briefly several known approaches for \eqref{CSF1}.  One  is  based on the standard Gagliardo-Nirenberg inequality and the fractional chain rule, see e.g. \cite{Gatto}, and standard interpolation inequalities,  as in \cite{MorSch'18, BelFrVis}. Another approach is based on the  Hardy-Lieb-Thirring's  many body interpolation inequalities,  as in \cite{LunNamPor}.  These approaches use essentially the fact that $p=2$.

\medskip 
In this paper,  we propose a different approach to establish \Cref{thm1} and  \Cref{thm2}.  The proof of \Cref{thm1} follows closely the approach proposed by Nguyen  \cite{NgSob3} in his study Sobolev's inequalities associated with non-local, non-convex functionals. The idea is first to establish the corresponding Poincar\'e inequality. One then uses a covering argument to derive from it an estimate in $L^\gamma_w$ ($L^{\gamma}$-weak). The estimate in  $L^\gamma$ is then established via the estimate in $L^\gamma_w$
and involves the truncation technique due to Maz'ya and a result of the theory of maximal (sharp) functions due to Fefferman and Stein. The proof of \Cref{thm2} uses  similar arguments of Nguyen and Squassina \cite{HmnSqa} where the authors established the full range Cafffarelli-Kohn-Nirenberg's inequality for fractional Sobolev spaces. The idea is to first establish the conclusion of \Cref{thm2} under the assumption \eqref{thm2-cd4-1}. The starting point is the corresponding Gagliardo-Nirenberg type inequality established in \Cref{thm1} (see \Cref{domSob}). One then decomposes 
the space into annulus and  applies appropriately the corresponding Gagliardo-Nirenberg type inequality. This idea has its roots from harmonic analysis when the decomposition is given  for the  frequency variables. The proof in the case \eqref{thm2-cd4-2} is derived from \eqref{thm2-cd4-1} using a scaling argument. 

\medskip 

The organization of the paper  is as follows. \Cref{sect-GN} and  \Cref{sect-CKN} are devoted to the proof of \Cref{thm1} and  \Cref{thm2}, respectively. In \Cref{sect-GN} (\Cref{sect-E}), we also derive a connection between our conditions in \Cref{thm1} with the  known result forms.  In \Cref{sect-P}, we use \Cref{thm2} to establish a new family of  one body interpolation  inequalities. Another main ingredient of this proof is the sharp (fractional) Hardy inequalities with the remainder due to Frank and Seiringer \cite{FrSei}.  We then use this family of one body interpolation  inequalities to establish a new family of many body interpolation type inequalities  following the strategy of Lundholm, Nam,  and Portmann  \cite{LunNamPor}. These results are new even in the case $s=1$ and $p \neq 2$ and their proofs are new even in the case $p=2$.

\section{Gagliardo-Nirenberg interpolation  inequalities involving Coulomb terms - Proof of \Cref{thm1}} \label{sect-GN}

This section is devoted to the proof of \Cref{thm1}. The proof is in the spirit of the one given in \cite{NgSob3} where the author established new Poincar\'e and Sobolev inequalities related to new characterizations of Sobolev spaces via non-local, non-convex terms in \cite{NgSob1, BourgNg, NgSob2} (see also \cite{BrezisNg, BreHmn, MallickNg} for related topics).  The ideas  of the proof are as follows. We first derive a Poincar\'e inequality involving Coulomb terms, which is almost free in this context. The integrability desired is then established via Vitali's covering lemma and the truncation method, which has its root in the work of Mazda \cite{Ma}.  This part also uses an interesting result of the theory of sharp functions due to Fefferman and Stein \cite{FS}.  This section containing three subsections is organized as follows. Several useful lemmas are presented in the first subsection. The proof of \Cref{thm1} is then  given in the second one. In the last subsection, we discuss other forms of the assumptions of \Cref{thm1}.

\subsection{Preliminaries}

For $D$ a measurable set of $\mR^d$ and $g \in L^1(D)$, denote $|D|$ its Lebesgue mesure and 
$$
(g)_D = \fint_D g(y) \,  dy: = \frac{1}{|D|} \int_{D} g(y) \, dy. 
$$

We begin with  a simple but useful version of Poincar\'e  inequality involving a Coulombian term. 

 \begin{lem}\label{lem-Poincare}
 Let $d \ge 1$, $0 \le s \le 1$, $1 < \gamma < + \infty$, $1 \leq p, \, q < + \infty$, $0<\alpha <  d$, and $0 < \beta_1, \, \beta_2 < + \infty$, and assume \eqref{betaEquations}, and let $B$ be an open ball or an open cube in $\ro^d$.   We have,  for $u\in L^1(B)$, 
 \begin{align*}
 \fint_{B}\md{u-(u)_B} \, dx  \leq \frac{C}{\md{B}^{\frac{1}{\gamma}}} \nrm{u}_{\wsp(B)}^{\beta_1p} \bct{\int_B\int_B \frac{|u(x)|^q|u(y)|^q}{|x-y|^{d-\alpha}}dxdy}^{\beta_2},
 \end{align*}
 where $C>0$ is a constant independent of $u$ and $B$. 
 \end{lem}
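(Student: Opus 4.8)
The plan is to reduce the Poincaré-type estimate to a purely elementary inequality by exploiting the scaling identities in \eqref{betaEquations}. First, I would observe that the left-hand side is translation invariant and that both sides transform consistently under dilations: if $B$ has radius (or side length) $R$, then rescaling $x = Ry$ turns $\nrm{u}_{\wsp(B)}$ into $R^{d/p - s}$ times the corresponding quantity on the unit ball/cube, the Coulomb double integral picks up a factor $R^{d+\alpha}$, and $\fint_B |u - (u)_B|$ is scale invariant after normalizing $u$ as well. Using \eqref{betaEquations}, the exponents match: $(d/p - s) p \beta_1 + (d + \alpha)\beta_2 = (d - sp)\beta_1 + (d+\alpha)\beta_2 = d/\gamma$, which is exactly the power of $|B|^{-1/\gamma}$ on the right. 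So after rescaling I may assume $B$ is the unit ball (or unit cube), and the claim becomes: $\fint_B |u - (u)_B| \le C \nrm{u}_{\wsp(B)}^{\beta_1 p}\big(\int_B\int_B \frac{|u(x)|^q |u(y)|^q}{|x-y|^{d-\alpha}}\big)^{\beta_2}$ with $C$ depending only on the parameters.

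Next, on the fixed unit ball I would prove this via an interpolation between two trivial bounds. On the one hand, $\fint_B |u - (u)_B|\,dx \le 2 \fint_B |u|\,dx$, so it suffices to bound $\fint_B |u|$. On the other hand, I would bound $\int_B |u|$ in terms of the Coulomb term alone: since $|x - y| \le \mathrm{diam}(B) = C_d$ for $x,y \in B$, we have $|x-y|^{-(d-\alpha)} \ge C_d^{-(d-\alpha)}$ (recall $0 < \alpha < d$ so the exponent $d - \alpha$ is positive), hence
\begin{equation*}
\int_B\int_B \frac{|u(x)|^q |u(y)|^q}{|x-y|^{d-\alpha}}\,dx\,dy \ge c_d \Big(\int_B |u|^q\,dx\Big)^2,
\end{equation*}
and by Hölder $\int_B |u|\,dx \le |B|^{1-1/q}\big(\int_B |u|^q\big)^{1/q}$, so $\int_B |u| \le C\big(\int_B\int_B \cdots\big)^{1/(2q)}$. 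Combined with the standard fractional/first-order Poincaré inequality on the unit ball (valid for $0 \le s \le 1$, $1 \le p < \infty$; for $s = 0$ it is again just Hölder, since then $d - sp = d$ and the two identities force $p = q$, $\alpha = d$ and the estimate is immediate), namely $\fint_B |u - (u)_B| \le C \nrm{u}_{\wsp(B)}$, I have two competing controls on $\fint_B |u - (u)_B|$: one by $\nrm{u}_{\wsp(B)}$ and one by the square root of the Coulomb energy (up to constants, after again using $|u - (u)_B| \le |u| + |(u)_B|$ and the bound on $\fint_B|u|$). Raising the first to the power $p\beta_1 \gamma \cdot$(something) and the second to $2q\beta_2\cdot$(something) and multiplying — more precisely, writing $\fint_B|u-(u)_B| = (\fint_B|u-(u)_B|)^{\theta}(\fint_B|u-(u)_B|)^{1-\theta}$ for a suitable $\theta$ — and invoking \eqref{betaEquations} in the form $p\beta_1 + 2q\beta_2 = 1$ to fix the interpolation weight, yields the claimed product bound on the unit ball.

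The main subtlety — the step I expect to require the most care — is the bookkeeping to make the interpolation exponents come out to exactly $\beta_1 p$ and $\beta_2$ with no leftover powers, and in particular to handle the homogeneity correctly so that the inequality is genuinely dimensionally balanced rather than merely holding for normalized $u$; the identity $\beta_1 p + 2 \beta_2 q = 1$ is precisely what guarantees that $\fint_B |u - (u)_B|$ (which is degree one in $u$) equals the product of a degree-$\beta_1 p$ factor and a degree-$2\beta_2 q \cdot \frac{1}{2}\cdot 2 = 2\beta_2 q$ ... i.e. degree $\beta_1 p + 2\beta_2 q = 1$ object, so the powers are consistent. A second point to check is the degenerate endpoints $s = 0$ and $s = 1$, where $\wsp(B)$ means the $L^p$ norm or the gradient $L^p$ norm respectively; for $s = 1$ the Poincaré inequality $\fint_B|u - (u)_B| \le C\|\nabla u\|_{L^p(B)}$ on the unit cube/ball is classical, and for $s = 0$ the constraint equations collapse the statement to Hölder's inequality as noted. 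Everything else is routine.
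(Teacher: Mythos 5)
Your proof is correct and follows essentially the same route as the paper: two competing bounds on $\fint_B|u-(u)_B|$ --- one by the $\dot W^{s,p}$ seminorm via the (fractional) Poincar\'e inequality, one by the Coulomb term via H\"older and the lower bound $|x-y|^{-(d-\alpha)}\ge c$ on a bounded set --- multiplied together with exponents $p\beta_1$ and $2q\beta_2$ using $p\beta_1+2q\beta_2=1$; the paper merely tracks the powers of $|B|$ directly rather than rescaling to the unit ball first. One small slip: for $s=0$ the identities \eqref{betaEquations} do not force $p=q$ and $\alpha=d$ (that degeneration occurs only for $\gamma=1$, and here $\gamma>1$, $\alpha<d$), but this is harmless since the $s=0$ instance of the Poincar\'e bound is just H\"older's inequality $\fint_B|u-(u)_B|\le 2|B|^{-1/p}\|u\|_{L^p(B)}$ in any case.
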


\begin{proof} 
It is easy to check that  
\begin{align}\label{p'care1}
\fint_B\md{u-(u)_B} \, dx  \le \fint_B \fint_{B} |u(x) - u(y)| \, dx \, dy  \leq \frac{C}{\md{B}^\frac{d-sp}{pd}} \nrm{u}_{\wsp(B)}
\end{align}
and
\begin{multline}\label{p'care2}
\fint_B\md{u-(u)_B} \, dx \leq 2 \fint_B |u| \, dx   \leq \frac{2}{\md{B}^\frac{1}{q}} \bct{\int_B |u(x)|^q dx}^\frac{1}{q} \\[6pt] \leq \frac{C}{\md{B}^\frac{d+\alpha}{2qd}} \bct{\int_B\int_B \frac{|u(x)|^q|u(y)|^q}{\md{x-y}^{d-\alpha}}dxdy}^\frac{1}{2q}.
\end{multline}
Here and in what follows  in this proof, $C$ denotes a positive constant independent of $u$ and $B$. 

Since $\beta_1, \beta_2 >  0$ and $p\beta_1 +2q\beta_2=1$ by  \eqref{betaEquations}, we have 
\begin{multline}\label{p'care-p1}
\fint_B|u-(u)_B|  =  \bct{\fint_B \md{u-(u)_B}}^{p\beta_1} \bct{\fint_B \md{u-(u)_B}}^{2q\beta_2} \\[6pt]
\mathop{\leq}^{\eqref{p'care1}, \eqref{p'care2}}  \frac{C}{\md{B}^{\frac{(d-sp)\beta_1}{d}+ \frac{(d+\alpha) \beta_2}{d}}} \nrm{u}_{\wsp(B)}^{\beta_1p} \bct{\int_B\int_B \frac{|u(x)|^q|u(y)|^q}{|x-y|^{d-\alpha}}dxdy}^{\beta_2}.
\end{multline}
Since 
$$
\frac{1}{\gamma} \mathop{=}^{\eqref{betaEquations}} \frac{(d-sp)\beta_1}{d}+ \frac{(d+\alpha) \beta_2}{d}, 
$$
the conclusion follows from \eqref{p'care-p1}. 
\end{proof}

We next present a variant of \cite[Lemma 7]{NgSob3},  whose proof is based on Vitali's covering lemma.

\begin{lem}\label{coveringLem}
Let $d \ge 1$, $1 <   \gamma < + \infty$, and $0 <  \beta_1, \, \beta_2 < + \infty$ be such that $\beta_1 \gamma + \beta_2 \gamma \ge 1 $. Let $h_1, h_2 \in L^1(\mR^d)$, and let $g$ be a measurable function defined in $\mR^d$. Assume that 
\begin{equation}\label{coveringLem1}
g(x) \leq \sup_B \frac{1}{\md{B}^\frac{1}{\gamma}} \bct{\int_B |h_1|}^{\beta_1} \bct{\int_B |h_2|}^{\beta_2} \mbox{ for a.e. } x\in \ro^d, 
\end{equation}
where the supremum is taken over all open balls (or open cubes) containing  $x$. Then
\begin{equation}\label{coveringLem-st1}
t^\gamma \big| \big\{ g>t \big\} \big| \leq C \nrm{h_1}^{\beta_1\gamma}_{L^1(\ro^d)} \nrm{h_2}^{\beta_2\gamma}_{L^1(\ro^d)}  \ \forall t>0,
\end{equation}
for some positive constant $C$ depending only on $\gamma, \beta_1, \beta_2$,  and $d$.
\end{lem}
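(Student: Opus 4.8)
The plan is to deduce \eqref{coveringLem-st1} from the pointwise bound \eqref{coveringLem1} by a Vitali covering argument on the superlevel sets of $g$, in the spirit of \cite[Lemma 7]{NgSob3}. We may assume that $\|h_1\|_{L^1(\mR^d)}$ and $\|h_2\|_{L^1(\mR^d)}$ are finite, since otherwise the right-hand side of \eqref{coveringLem-st1} equals $+\infty$ and there is nothing to prove. Fix $t > 0$ and set $E_t = \{x \in \mR^d : g(x) > t\}$. By \eqref{coveringLem1}, for almost every $x \in E_t$ one can choose an open ball (or open cube) $B_x$ containing $x$ such that
\[
\frac{1}{|B_x|^{1/\gamma}}\Big(\int_{B_x}|h_1|\Big)^{\beta_1}\Big(\int_{B_x}|h_2|\Big)^{\beta_2} > t .
\]
In particular $|B_x|^{1/\gamma} \le t^{-1}\|h_1\|_{L^1(\mR^d)}^{\beta_1}\|h_2\|_{L^1(\mR^d)}^{\beta_2}$, so these balls have uniformly bounded radii.

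Next I would invoke Vitali's covering lemma to extract an at most countable, pairwise disjoint subfamily $\{B_i\}_i$ of $\{B_x\}_x$ with $E_t \subset \bigcup_i 5 B_i$ up to a null set; when the $B_x$ are cubes one uses the analogous covering statement, or compares cubes with inscribed and circumscribed balls, which only affects the value of the final constant. Raising the per-ball inequality above to the power $\gamma$ then gives
\[
|E_t| \le 5^d \sum_i |B_i| \le \frac{5^d}{t^\gamma}\sum_i \Big(\int_{B_i}|h_1|\Big)^{\beta_1\gamma}\Big(\int_{B_i}|h_2|\Big)^{\beta_2\gamma}.
\]

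The last step, and the only point where the hypothesis $\beta_1\gamma + \beta_2\gamma \ge 1$ is actually used, is the elementary superadditivity estimate: for nonnegative sequences $(a_i)$, $(b_i)$ with finite sums and exponents $\theta_1, \theta_2 \ge 0$ satisfying $\theta_1 + \theta_2 \ge 1$, one has
\[
\sum_i a_i^{\theta_1} b_i^{\theta_2} \le \Big(\sum_i a_i\Big)^{\theta_1}\Big(\sum_i b_i\Big)^{\theta_2}.
\]
This is seen by writing $\theta_j = \mu_j + \nu_j$ with $\mu_1 + \mu_2 = 1$ and $\mu_j, \nu_j \ge 0$, estimating $a_i^{\nu_1} b_i^{\nu_2} \le (\sum_k a_k)^{\nu_1}(\sum_k b_k)^{\nu_2}$ term by term, and applying H\"older's inequality to $\sum_i a_i^{\mu_1} b_i^{\mu_2}$. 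Applying this with $a_i = \int_{B_i}|h_1|$, $b_i = \int_{B_i}|h_2|$ and $\theta_j = \beta_j \gamma$, and using that the $B_i$ are pairwise disjoint so that $\sum_i \int_{B_i}|h_j| \le \|h_j\|_{L^1(\mR^d)}$ for $j = 1, 2$, one arrives at $|E_t| \le 5^d t^{-\gamma}\|h_1\|_{L^1(\mR^d)}^{\beta_1\gamma}\|h_2\|_{L^1(\mR^d)}^{\beta_2\gamma}$, which is \eqref{coveringLem-st1}.

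I expect the main (and essentially the only) subtle point to be the superadditivity estimate above, together with the observation that the constraint $\beta_1\gamma + \beta_2\gamma \ge 1$ is exactly the condition making it valid — this is what fixes the sharp form of the hypothesis in the lemma. The remaining ingredients are the standard Vitali-covering / weak-$L^\gamma$ machinery, the only mild technicality being the treatment of cubes alongside balls in the covering lemma, which merely enlarges the constant $C$.
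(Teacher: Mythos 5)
Your proposal is correct and follows essentially the same route as the paper: extract a disjoint subfamily via the Vitali $5r$-covering lemma from the balls witnessing the supremum on $\{g>t\}$, and then sum using the superadditivity inequality $\sum_i a_i^{\beta_1\gamma} b_i^{\beta_2\gamma} \le (\sum_i a_i)^{\beta_1\gamma}(\sum_i b_i)^{\beta_2\gamma}$, which is exactly the paper's \Cref{lem-Ineq} and is the only place the hypothesis $\beta_1\gamma+\beta_2\gamma\ge 1$ enters. Your additional details (the uniform bound on the radii needed for the covering lemma, and the H\"older-based proof of the superadditivity estimate, whose proof the paper omits) are accurate.
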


\begin{proof} We only consider where the case the supremum is taken over all open balls containing  $x$. The case where the supremum is taken over all open cubes containing  $x$ follows in the same lines and is omitted. 

Fix $t > 0$. From \eqref{coveringLem1}, for a.e. $y\in \Sb{g>t}$,  there exists an open ball $B_y$ containing $y$ such that
\begin{equation*}
t \leq \frac{2}{\md{B_y}^\frac{1}{\gamma}} \bct{\int_{B_y}|h_1| \, dx}^{\beta_1} \bct{\int_{B_y}|h_2| \, dx}^{\beta_2}, 
\end{equation*}
which yields
\begin{equation*}
 \md{B_y} \leq \frac{2}{t^\gamma} \bct{\int_{B_y}|h_1| \, dx }^{\beta_1\gamma} \bct{\int_{B_y}|h_2| \, dx}^{\beta_2\gamma}. 
\end{equation*}
By applying Vittali's covering lemma to the set $\big\{g > t \big\}$ and to the family of open ball $B_y$,  there exists a countable  collection of mutually disjoint  open balls $(B_i)$ such that outside a set of zero measure, \footnote{Here $5 B_i$ denotes the open ball with the same center as $B_i$ but 5 times radius.}
\begin{equation}\label{coveringLem-p1}
\Sb{g>t} \subset \cup_i 5B_i,
\end{equation}
and 
\begin{equation}\label{coveringLem2}
\md{B_i} \leq \frac{C}{t^\gamma} \bct{\int_{B_i}|h_1|}^{\beta_1\gamma} \bct{\int_{B_i}|h_2|}^{\beta_2\gamma}.
\end{equation}
Applying \Cref{lem-Ineq} below  with $\tau= \beta_1\gamma$ and $\eta= \beta_2\gamma$,  after noting that $\beta_1 \gamma + \beta_2 \gamma \ge 1$,  we have 
  \begin{multline*}
  \md{\Sb{g>t}}  \mathop{\le}^{\eqref{coveringLem-p1}} C \sum_{i=1}^\infty |B_i|
 \mathop{\le}^{\eqref{coveringLem2}}  \frac{C}{t^\gamma} \sum_i \bct{\int_{B_i}|h_1|}^{\beta_1\gamma} \bct{\int_{B_i}|h_2|}^{\beta_2\gamma}  \\[6pt]
 \mathop{\leq}^{\Cref{lem-Ineq}} \frac{C}{t^\gamma} \bct{\int_{\ro^d}|h_1|}^{\beta_1\gamma} \bct{\int_{\ro^d}|h_2|}^{\beta_2\gamma},  
  \end{multline*}
which is the conclusion. 
\end{proof}

\begin{remark} \rm The proof of \Cref{lem-Poincare} and  \Cref{coveringLem} also work for $\gamma = 1$. Nevertheless, these results are later only applied to the case $\gamma > 1$. 
\end{remark}

The following simple result, whose proof is omitted, is used in the proof of  \Cref{coveringLem}. 

\begin{lem}\label{lem-Ineq} For $\tau, \eta > 0$ with $\tau + \eta \ge 1$, we have 
\begin{equation}
\sum_{i=1}^\infty   |a_{i}|^{\tau} |b_{i}|^{\eta}    \leq  \bct{\sum_{i=1}^\infty |a_{i}|}^{\tau} \bct{\sum_{i=1}^\infty |a_{i}|}^{\eta}  \mbox{ for } a_{i} \, b_i   \in \mR.
\end{equation}
\end{lem}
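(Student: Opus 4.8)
The plan is to deduce the inequality from two completely elementary facts: Hölder's inequality for sequences, and the monotonicity of $\ell^r$-norms in the range $r\ge 1$. Write $A=\sum_{i}|a_i|$ and $B=\sum_{i}|b_i|$. If either $A$ or $B$ is infinite the right-hand side is $+\infty$ and there is nothing to prove, so we may assume both are finite; in particular each $|a_i|\le A$ and $|b_i|\le B$.

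First I would apply Hölder's inequality to the sequences $\big(|a_i|^{\tau}\big)_i$ and $\big(|b_i|^{\eta}\big)_i$ with the conjugate exponents $\dfrac{\tau+\eta}{\tau}$ and $\dfrac{\tau+\eta}{\eta}$ (these are $\ge 1$ because $\tau,\eta>0$, and $\frac{\tau}{\tau+\eta}+\frac{\eta}{\tau+\eta}=1$), obtaining
\[
\sum_{i=1}^\infty |a_i|^{\tau}|b_i|^{\eta}
\le \Big(\sum_{i=1}^\infty |a_i|^{\tau+\eta}\Big)^{\frac{\tau}{\tau+\eta}}
\Big(\sum_{i=1}^\infty |b_i|^{\tau+\eta}\Big)^{\frac{\eta}{\tau+\eta}} .
\]
Next, setting $r:=\tau+\eta$, which is $\ge 1$ — and this is the \emph{only} place the hypothesis $\tau+\eta\ge 1$ is used — I would invoke that for nonnegative reals $x_i$ with finite sum $S$ one has $x_i\le S$, hence $\sum_i x_i^{\,r}\le S^{\,r-1}\sum_i x_i=S^{\,r}$. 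Applying this to $x_i=|a_i|$ and to $x_i=|b_i|$ gives $\sum_i|a_i|^{\tau+\eta}\le A^{\tau+\eta}$ and $\sum_i|b_i|^{\tau+\eta}\le B^{\tau+\eta}$, and substituting into the previous display yields
\[
\sum_{i=1}^\infty |a_i|^{\tau}|b_i|^{\eta}\le \big(A^{\tau+\eta}\big)^{\frac{\tau}{\tau+\eta}}\big(B^{\tau+\eta}\big)^{\frac{\eta}{\tau+\eta}}=A^{\tau}B^{\eta},
\]
which is exactly the asserted inequality (with the right-hand side read, as clearly intended from the application, as $\big(\sum_i|a_i|\big)^{\tau}\big(\sum_i|b_i|\big)^{\eta}$).

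There is essentially no obstacle: the statement is a routine combination of Hölder and the embedding of $\ell^1$ into $\ell^{r}$ for $r\ge1$. The only points meriting a line of care are the reduction handling possibly infinite sums and the verification that the chosen Hölder exponents are admissible, both of which are immediate. An alternative route — normalizing so that $\sum_i|a_i|=\sum_i|b_i|=1$, using $|a_i|,|b_i|\le1$ to replace $(\tau,\eta)$ by a pair of smaller exponents summing to exactly $1$, and then applying ordinary Hölder — works equally well; the argument above is preferred only because it avoids any case distinction.
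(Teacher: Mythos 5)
Your proof is correct. The paper itself omits the proof of this lemma (it is stated as "simple" with proof omitted), so there is no argument of the authors' to compare against; your two-step derivation --- H\"older with the conjugate exponents $\tfrac{\tau+\eta}{\tau}$, $\tfrac{\tau+\eta}{\eta}$, followed by the elementary bound $\sum_i x_i^{\,r}\le\bigl(\sum_i x_i\bigr)^{r}$ for $r=\tau+\eta\ge 1$ --- is a complete and standard way to fill the gap, and it isolates exactly where the hypothesis $\tau+\eta\ge 1$ enters. You also correctly read past the typo in the statement (the right-hand side should be $\bigl(\sum_i|a_i|\bigr)^{\tau}\bigl(\sum_i|b_i|\bigr)^{\eta}$, with $\sum_i|b_i|$ in the second factor), which is the form actually used in the proof of \Cref{coveringLem}.
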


 As a consequence of \eqref{coveringLem-st1}, one derives that $g \in L^{\gamma}_{w}(\mR^d)$ ($L^{\gamma}$-weak) if $g$ is non-negative and satisfies \eqref{coveringLem1}. We next present a variant of \Cref{coveringLem} which deals with the $L^\gamma$-integrability of $g$ instead of $L^\gamma$-weak integrability.  This variant inspired by \cite[Lemma 8]{NgSob3} is the main ingredient of the proof of \Cref{thm1}.   To this end, we first recall the definition of the dyadic maximal functions and the dyadic sharp maximal functions, see, e.g.,  \cite{StHar}.

\begin{defi}\label{srpFuncs}
Let $g\in L^1_{loc}(\ro^d)$. The dyadic maximal function $M^\Delta g$ and the dyadic sharp maximal function $g^{\sharp, \Delta}$ are defined as follows
$$\bct{M^\Delta g}(x): = \sup_{Q} \fint_Q |g|dy,$$ and $$g^{\sharp,\Delta}(x) := \sup_{Q} \fint_Q|g-(g)_Q| dy,$$ where the supremum is taken over all dyadic cubes $Q$ containing $x$.
\end{defi}

The following definition is also useful. 

\begin{defi}\label{def-Truncation}
 For each $k\in \mathbb{Z}$ and a {\it non-negative} function $g$ defined in $\mR^d$, define the truncation operator 
 \begin{align*}
 \cT_k(g)(x)= \left\{\begin{array}{cl}
	10^{k+1}-10^k &\text{ if } x\in \Sb{g>10^{k+1}}, \\[6pt] 
	g -10^k &\text{ if } x\in \Sb{10^k<g\leq 10^{k+1}}, \\[6pt]
	 0 &\text{ if } x\in \Sb{g\leq10^k}. 	 
 	\end{array} \right.
 \end{align*}
\end{defi}

We are ready to present a variant of \Cref{coveringLem} concerning $L^\gamma$-integrability. 

\begin{lem}\label{lem-key-thm1}
Let $d \ge 1$, $1 < \gamma  < + \infty$,  and $0  <  \beta_1, \, \beta_2 < + \infty$ be such that 
\begin{equation}\label{lem-key-thm1-cd0}
\beta_1 \gamma +\beta_2\gamma\geq 1. 
\end{equation}
Let  $g \in L^1(\ro^d)$ with $|\{|g| > 0 \}| <  + \infty$, and set 
\begin{equation}\label{lem-key-thm1-def-gk}
g_k = \cT_k (g) \mbox{ for } k \in \mZ. 
\end{equation}
Assume that there exist two sequences $(h_{1, k} ), (h_{2, k}) \subset L^1(\ro^d)$,  and two non-negative functions $h_1, \, h_2 \in L^1(\ro^d)$ such that, for $t > 0$ and $k \in \mZ$,  
\begin{equation}\label{lem-key-thm1-cd1}
\md{\Sb{g_k^{\sharp,\Delta}>t}} \leq \frac{1}{t^\gamma}  \nrm{h_{1, k}}^{\beta_1\gamma}_{L^1(\ro^d)}  \nrm{h_{2, k}}^{\beta_2\gamma}_{L^1(\ro^d)} , 
\end{equation}
and, for $j=1, 2$,  
\begin{equation}\label{lem-key-thm1-st2}
 \sum _{k=1}^\infty \md{h_{j, k}} \leq h_j \mbox{ in } \mR^d.
\end{equation}
Then $g \in L^\gamma(\mR^d)$ and
\begin{align*}
\|g \|_{L^\gamma(\mR^d)} \leq C \nrm{h_1}^{\beta_1}_{L^1(\ro^d)}  \nrm{h_2}^{\beta_2}_{L^1(\ro^d)}
\end{align*}
for some positive constant $C$ independent of $g, \, h_1$,  and $h_2$.
\end{lem}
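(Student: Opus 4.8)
The plan is to bootstrap from the weak-type estimate of \Cref{coveringLem} to a strong $L^\gamma$ bound using the dyadic truncation $\cT_k$, the Fefferman--Stein sharp maximal function inequality, and a careful summation over the dyadic levels. Here is the outline.

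\textbf{Step 1: Level sets of $g$ versus the truncations $g_k$.} First I would record the elementary fact that the pieces $g_k = \cT_k(g)$ reconstruct $|g|$ in the sense that $\sum_{k \in \mZ} g_k = |g|$ pointwise (only finitely many terms are nonzero at each point), and, crucially, that $|g_k| \le 10^{k+1} - 10^k \le 9 \cdot 10^k$ together with $\{g_k > 0\} \subset \{|g| > 10^k\}$. From this one gets, for the purpose of estimating $\|g\|_{L^\gamma}^\gamma$, a comparison of the form $\int_{\mR^d} |g|^\gamma \lesssim \sum_{k} 10^{k\gamma} |\{|g| > 10^k\}|$, so it suffices to control $\sum_k 10^{k\gamma}|\{|g| > 10^k\}|$. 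Since $\{|g| > 10^{k+1}\} \subset \{g_k = 9\cdot 10^k\}$ and on that set $g_k^{\sharp,\Delta}$ is, up to a dimensional constant, comparable to $10^k$ away from a portion absorbed by the dyadic maximal function — this is exactly where the Fefferman--Stein good-$\lambda$/sharp-function machinery enters.

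\textbf{Step 2: Fefferman--Stein and reduction to $g_k^{\sharp,\Delta}$.} The classical Fefferman--Stein inequality gives $\|g_k\|_{L^\gamma(\mR^d)} \le C \|g_k^{\sharp,\Delta}\|_{L^\gamma(\mR^d)}$ for each $k$ (valid since $1 < \gamma < + \infty$ and $g_k \in L^1$ with $M^\Delta g_k \in L^\gamma$, which holds because $g_k$ is bounded and compactly supported in measure). Hypothesis \eqref{lem-key-thm1-cd1} says $g_k^{\sharp,\Delta}$ lies in weak-$L^\gamma$ with norm controlled by $\|h_{1,k}\|_{L^1}^{\beta_1} \|h_{2,k}\|_{L^1}^{\beta_2}$. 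A weak-$L^\gamma$ bound does not by itself give a strong $L^\gamma$ bound for a single function, so the key point is to exploit the summability \eqref{lem-key-thm1-st2}: one estimates $\|g\|_{L^\gamma}^\gamma \lesssim \sum_k \|g_k\|_{L^\gamma}^\gamma \lesssim \sum_k \|g_k^{\sharp,\Delta}\|_{L^\gamma}^\gamma$ but now must turn each weak-$L^\gamma$ quantity into something summable. The cleaner route is to avoid full $L^\gamma$ of each $g_k$: instead write $\|g\|_{L^\gamma}^\gamma \lesssim \sum_k 10^{k\gamma}\big|\{|g|>10^{k+1}\}\big|$ and bound $\big|\{|g|>10^{k+1}\}\big| = \big|\{g_k = 9\cdot 10^k\}\big| \le \big|\{M^\Delta g_{k} > c\, 10^k\}\big|$, then split via the sharp function: on $\{g_k = 9 \cdot 10^k\}$ one has, by a standard argument, $M^\Delta(g_k) \le g_k^{\sharp,\Delta} + $ (contribution controllable on the next level down). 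Applying \eqref{lem-key-thm1-cd1} at level $t \sim 10^k$ then gives $10^{k\gamma}\big|\{|g|>10^{k+1}\}\big| \lesssim \|h_{1,k}\|_{L^1}^{\beta_1\gamma}\|h_{2,k}\|_{L^1}^{\beta_2\gamma}$ (plus a harmless cross term from neighboring levels, which I would fold into a geometric-series absorption since the truncation scales are geometric with ratio $10$).

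\textbf{Step 3: Summation over $k$.} Summing the bound from Step 2 over $k \ge 1$ (the levels $k \le 0$ contribute a bounded-on-finite-measure tail handled directly by $\|g\|_{L^1} \le \|h_1\|_{L^1}^{\beta_1}\|h_2\|_{L^1}^{\beta_2}$-type control together with $|\{|g|>0\}| < +\infty$), I get
\[
\|g\|_{L^\gamma(\mR^d)}^\gamma \lesssim \sum_{k=1}^\infty \|h_{1,k}\|_{L^1(\mR^d)}^{\beta_1\gamma} \|h_{2,k}\|_{L^1(\mR^d)}^{\beta_2\gamma}.
\]
Now I invoke \Cref{lem-Ineq} with $\tau = \beta_1\gamma$, $\eta = \beta_2\gamma$ (legitimate since $\beta_1\gamma + \beta_2\gamma \ge 1$ by \eqref{lem-key-thm1-cd0}), applied to the sequences $a_k = \|h_{1,k}\|_{L^1}$ and $b_k = \|h_{2,k}\|_{L^1}$, to obtain $\sum_k a_k^{\beta_1\gamma} b_k^{\beta_2\gamma} \le \big(\sum_k a_k\big)^{\beta_1\gamma}\big(\sum_k b_k\big)^{\beta_2\gamma}$. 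Finally \eqref{lem-key-thm1-st2} gives $\sum_k \|h_{j,k}\|_{L^1} = \sum_k \int |h_{j,k}| \le \int h_j = \|h_j\|_{L^1}$ for $j=1,2$, so the right side is $\le \|h_1\|_{L^1}^{\beta_1\gamma}\|h_2\|_{L^1}^{\beta_2\gamma}$, and taking $\gamma$-th roots yields the claim.

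\textbf{Main obstacle.} The delicate point is Step 2: making rigorous the passage from the weak-type control of $g_k^{\sharp,\Delta}$ to a clean bound on the single dyadic layer $\big|\{|g|>10^{k+1}\}\big|$ without losing a factor that destroys summability. The standard Fefferman--Stein argument controls $M^\Delta g_k$ by $g_k^{\sharp,\Delta}$ only after a good-$\lambda$ iteration, and here one must track how the truncation at level $k$ interacts with the maximal function — specifically that the ``overflow'' from a cube where the average of $g_k$ is already large is captured either by $g_k^{\sharp,\Delta}$ on that scale or pushed down to the truncation $g_{k-1}$, producing a sum with geometric decay in the shift that can be absorbed. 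This bookkeeping (the analogue of \cite[Lemma 8]{NgSob3}) is the technical heart; everything else is routine once it is set up. I would handle it by fixing $k$, working with the dyadic cubes on which $\fint_Q g_k > c\,10^k$, and observing that on such a cube either the oscillation $\fint_Q |g_k - (g_k)_Q|$ is $\gtrsim 10^k$ — giving a point in $\{g_k^{\sharp,\Delta} > c'10^k\}$ — or $(g_k)_Q$ itself is $\gtrsim 10^k$, which forces $|g| \gtrsim 10^k$ on a definite fraction of $Q$ and feeds back into the level set at the adjacent truncation index.
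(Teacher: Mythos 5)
Your overall architecture coincides with the paper's: truncate $|g|$ into the layers $g_k=\cT_k(|g|)$, control each layer through its dyadic sharp maximal function via a good-$\lambda$ argument, and close the sum over $k$ with \Cref{lem-Ineq} under the hypothesis $\beta_1\gamma+\beta_2\gamma\ge1$. Steps 1 and 3 are essentially what the paper does. One caveat in Step 3: the finiteness of $\md{\Sb{|g|>0}}$ should enter only qualitatively, to kill the remainder $\int_{10^m}^{10^{m+1}}t^{\gamma-1}\md{\Sb{|g|>t}}\,dt\le 10^{(m+1)\gamma}\md{\Sb{|g|>0}}$ as $m\to-\infty$; bounding the levels $k\le0$ by a multiple of $\md{\Sb{|g|>0}}$, as you suggest, cannot give the stated inequality, whose right-hand side involves only $h_1$ and $h_2$. (The restriction to $k\ge1$ in \eqref{lem-key-thm1-st2} is a typo in the statement; the sum should run over $k\in\mZ$, as it does in the application.)

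The genuine gap is the absorption mechanism in Step 2. You attribute the absorption to ``geometric-series absorption since the truncation scales are geometric with ratio $10$,'' and your dichotomy in the final paragraph sends the bad alternative back to ``the level set at the adjacent truncation index,'' i.e.\ produces a term $C\,\md{\Sb{|g|>10^{k}}}$ with a fixed constant $C\ge1$. This does not close: multiplying by $10^{k\gamma}$ and summing, that feedback equals $C\,10^{\gamma}\sum_k10^{k\gamma}\md{\Sb{|g|>10^{k+1}}}$, a constant strictly larger than $1$ times the quantity you are bounding, and the ratio $10$ between thresholds provides no decay because $\md{\Sb{|g|>10^{k}}}\ge\md{\Sb{|g|>10^{k+1}}}$. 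What actually makes the argument work is the \emph{tunable} constant in the Fefferman--Stein good-$\lambda$ estimate
\[
\md{\Sb{M^\Delta f>\alpha,\ f^{\sharp,\Delta}\le c\alpha}}\le\frac{2^dc}{1-b}\,\md{\Sb{M^\Delta f>b\alpha}},
\]
applied with $f=g_k$, $\alpha=10^k$, $b=1/10$: the right-hand side is then controlled via the (weak) $L^\gamma$-boundedness of $M^\Delta$ by a multiple of $c\,10^{-k\gamma}\int\md{g_k}^\gamma$, and $\sum_k\int\md{g_k}^\gamma\lesssim\int_0^\infty t^{\gamma-1}\md{\Sb{|g|>t}}\,dt$, so choosing $c$ small makes the total cross term at most half of the left-hand side, at the harmless price of a factor $c^{-\gamma}$ when \eqref{lem-key-thm1-cd1} is applied at level $t=c\,10^k$. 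Your dichotomy (``either the oscillation on $Q$ is $\gtrsim10^k$ or the average is, forcing $|g|\gtrsim10^k$ on a definite fraction of $Q$'') produces no such small constant, so the key estimate of Step 2 fails as written. If you instead invoke the standard good-$\lambda$ inequality directly (as the paper does, citing Stein), the rest of your outline goes through.
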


\begin{proof}
Let $0<b<1$, $c>0$, and $f\in L^1_{loc}(\ro^d)$. We recall that, see, e.g., \cite[Estimate (22), p.153]{StHar}, 
\begin{align}\label{weak-strngEst1}
\md{\Sb{M^\Delta f>\alpha, f^{\sharp,\Delta}\leq c\alpha}} \leq \frac{2^dc}{1-b} \md{\Sb{M^\Delta f>b\alpha}}  \ \forall \alpha>0.
\end{align}

Applying \eqref{weak-strngEst1} with $f=g_k$, $b=\frac{1}{10}$, $\alpha=10^k$,  and $0<c<\frac{1}{2}$  (to be chosen later), we have 
\begin{align*}
\md{\Sb{M^\Delta g_k>10^k}} \leq c2^{d+1} \md{\Sb{M^\Delta g_k>10^{k-1}}} + \md{\Sb{ g^{\sharp,\Delta}_k>c10^k}}.
\end{align*}
This yields,  for any $m,n\in \mathbb{Z}$ with $n \ge m +1$, 
\begin{multline}\label{weak-strngEst2}
\sum_{k =m} ^n 10^{k\gamma}\md{\Sb{M^\Delta g_k>10^k}} \\[6pt] 
\leq c2^{d+1} \sum_{k = m}^n 10^{k\gamma}\md{\Sb{M^\Delta g_k>10^{k-1}}} + \sum_{k = m}^n 10^{k\gamma}\md{\Sb{ g^{\sharp,\Delta}_k>c10^k}}.
\end{multline}

We first derive an lower bound for the LHS of \eqref{weak-strngEst2}. From the definition of $g_k$ and the fact $\Sb{M^\Delta g_k>10^k}\supset \Sb{g_k>10^k}$,  we have, for $n \ge m+1$, 
\begin{align}\label{weak-strngEst3}
\sum_{k=m}^n 10^{k\gamma}\md{\Sb{M^\Delta g_k>10^k}} \geq C\int_{10^{m+1}}^{10^{n+2}} t^{\gamma-1} \md{\Sb{|g|>t}} dt. 
\end{align}
Here and in what follows in this proof, $C>0$ denotes a constant independent of $g$, $h_1$, $h_2$,  and $k, \, m, \, n$. 

We next derive an upper bound of the RHS of \eqref{weak-strngEst2}. By the theory of maximal functions, we deduce from  the definition of $g_k$ in \eqref{lem-key-thm1-def-gk} that, for $n \ge m+1$,  
\begin{equation}\label{weak-strngEst4}
\sum_{k = m}^n 10^{k\gamma}\md{\Sb{M^\Delta g_k>10^{k-1}}} \leq C \sum_{k=m}^n\int_{\ro^d} |g_k|^\gamma dx \leq C \int_{10^m}^{10^{n+2}} t^{\gamma-1}\md{\Sb{|g|>t}}dt.
\end{equation}
We also have
 \begin{equation}\label{weak-strngEst5}
 \sum_m^n 10^{k\gamma}\md{\Sb{ g^{\sharp,\Delta}_k>c10^k}} \mathop{\leq}^{\eqref{lem-key-thm1-cd1}} \frac{1}{c^\gamma} \sum_m^n \nrm{h_k^1}^{\beta_1\gamma}_{L^1(\ro^d)}
\nrm{h_k^2}^{\beta_2\gamma}_{L^1(\ro^d)} \mathop{\leq}^{\eqref{lem-key-thm1-cd0}} \frac{1}{c^\gamma} \nrm{h_1}^{\beta_1\gamma}_{L^1(\ro^d)}\nrm{h_2}^{\beta_2\gamma}_{L^1(\ro^d)}.
\end{equation}
Plugging the estimates \eqref{weak-strngEst3}, \eqref{weak-strngEst4} and \eqref{weak-strngEst5} into \eqref{weak-strngEst2} we obtain 
\begin{equation*}
\int_{10^{m+1}}^{10^{n+2}} t^{\gamma-1}\md{\Sb{|g|>t}}dt \leq C\bct{c2^{d+1}\int_{10^m}^{10^{n+2}} t^{\gamma-1}\md{\Sb{|g|>t}}dt + \frac{1}{c^\gamma} \nrm{h_1}^{\beta_1\gamma}_{L^1(\ro^d)}\nrm{h_2}^{\beta_2\gamma}_{L^1(\ro^d)} }. 
\end{equation*}
Choosing $c$ small enough so that $Cc2^{d+1}=\frac{1}{2}$, we have
\begin{equation*}
\int_{10^{m+1}}^{10^{n+2}} t^{\gamma-1}\md{\Sb{|g|>t}}dt \leq \int_{10^m}^{10^{m+1}} t^{\gamma-1}\md{\Sb{|g|>t}}dt + C\nrm{h_1}^{\beta_1\gamma}_{L^1(\ro^d)}\nrm{h_2}^{\beta_2\gamma}_{L^1(\ro^d)} .
\end{equation*}
Letting first $n \to + \infty$, then $m \to - \infty$, and noting that  $\md{\Sb{|g|>0}}< + \infty$, we obtain 
\begin{equation*}
\int_{\ro^d} |g(x)|^\gamma \, dx \leq C \nrm{h_1}^{\beta_1\gamma}_{L^1(\ro^d)}\nrm{h_2}^{\beta_2\gamma}_{L^1(\ro^d)},
\end{equation*}
which implies the conclusion. The proof is complete
\end{proof}

\subsection{Proof of Theorem \ref{thm1}.}  The proof follows from \Cref{lem-Poincare}, \Cref{coveringLem}, and \Cref{lem-key-thm1} as in \cite{NgSob3}. 
Set 
\begin{equation}\label{lem-key-thm1-def-gk}
g_k = \cT_k (|g|) \mbox{ for } k \in \mZ, 
\end{equation}
where $\cT_k$ is given by \Cref{def-Truncation}.  Define, for $x\in \mR^d$, 
\begin{equation*}
h_1(x) = 
\left\{\begin{array}{cl}
|g(x)|^p  &  \mbox { for } s=0,\\[6pt]
\dsp \int_{\ro^d} \frac{|g(x)-g(y)|^p}{|x-y|^{d+sp}} \, dy &  \mbox{ for } s \in (0, 1), \\[12pt] 
\md{\nabla g(x)}^p &  \mbox{ for } s=1, 
\end{array}\right.
\end{equation*}
\begin{equation*}
h_{1, k}(x) = 
\left\{\begin{array}{cl}
|g_k(x)|^p & \mbox { for } s=0,\\[6pt]
\dsp \int_{\ro^d} \frac{|g_k(x)-g_k(y)|^p}{|x-y|^{d+sp}} \, dy & \mbox{ for } s \in (0, 1), \\[12pt] 
\md{\nabla g_k(x)}^p & \mbox{ for } s=1,    
\end{array}\right.
\end{equation*}
\begin{equation*}
 h_2(x) := |g(x)|^q\int_{\ro^d} \frac{|g(y)|^q}{|x-y|^{d-\alpha}} \, dy,  \quad \mbox{ and }  \quad h_{2, k} (x):= |g(x)|^q\int_{\ro^d} \frac{|g_k(y)|^q}{|x-y|^{d-\alpha}} \, dy.
\end{equation*}

We claim that, for $j =1, \, 2$,   
\begin{equation}\label{thm1-claim1}
\sum _{k \in \mathbb{Z}} \md{h_{j, k}} \leq h_j \quad \mbox{ and } \quad   h_j \in L^1(\ro^d). 
\end{equation}

We admit Claim \eqref{thm1-claim1} and continue the proof. 

By \Cref{lem-Poincare}, we have 
\begin{align*}
g_k^{\sharp,\Delta}(x) \leq C \sup_{Q} \frac{1}{\md{Q}^\frac{1}{\gamma}} \bct{\int_Q |h_1^k|}^{\beta_1} \bct{\int_Q |h_2^k|}^{\beta_2},
\end{align*}
where the supremum is taken over all cubes $Q$ containing $x\in \ro^d$.
Applying \Cref{coveringLem}, we obtain, for $k \in \mZ$,  
\begin{align*}
\md{\Sb{g_k^{\sharp,\Delta}>t}} & \leq \frac{C}{t^\gamma}  \nrm{h_k^1}^{\beta_1\gamma}_{L^1(\ro^d)}  \nrm{h_k^2}^{\beta_2\gamma}_{L^1(\ro^d)}  \mbox{ for } t>0. 
\end{align*}
The conclusion now follows from \Cref{lem-key-thm1}.

\medskip 

It remains to prove Claim \eqref{thm1-claim1}.  We first establish Claim \eqref{thm1-claim1} with $j=1$. Claim \eqref{thm1-claim1} with $j=1$ is clear for $s=0$ and $s=1$. Claim \eqref{thm1-claim1} with $j=1$ in the case $s \in (0, 1)$ follows from the fact $p \ge 1$ and 
\begin{equation}\label{thm1-claim2}
\sum_{k\in\mathbb{Z}} |g_k(x)-g_k(y)| \leq |g(x)-g(y)|, 
\end{equation}
which can be proved as follows. 

We first deal with the case $|g(x)| \neq 0$ and $|g(y)| \neq 0$. 
Let $m, \, n \in \mZ$ be such that  
$$ 
10^m < |g(x)| \leq 10^{m+1} \quad \mbox{ and } \quad  10^n <|g(y)| \leq 10^{n+1}. 
$$
Without loss of generality, one might assume that $|g(y)| \ge |g(x)|$ and this in turn implies $n \ge m$.  

We have, for $k \in \mZ$,  
\begin{equation*}
g_k(x) = \left\{\begin{array}{cl}
0 & \mbox{ for }   k \geq  m+1, \\[6pt]
|g(x)| - 10^m  & \mbox{ for } k=m, \\[6pt]
10^{k+1}-10^k &  \mbox{ for }  k<m, 
\end{array}\right.  
\quad  \text{and} \quad
g_k(y) = \left\{\begin{array}{cl}
0 & \mbox{ for }   k \geq  n +1, \\[6pt]
|g(y)| - 10^n & \mbox{ for } k = n, \\[6pt]
10^{k+1}-10^k &  \mbox{ for }  k < n.
\end{array}\right.  
\end{equation*}
This yields, if $n \ge m + 1$,  
\begin{multline*}
\sum_{k\in \mathbb{Z}} |g_k(x)-g_k(y)| = \sum_{k=-\infty}^{m} |g_{k}(x)-g_k(y)| + \sum_{k=m+1}^l |g_k(y)| \\[6pt] 
 = \big| |g(x)| - 10^{m+1} \big|+ \sum_{k=m+1} ^{l -1}(10^{k+1}-10^k) + |g(y)|-10^n = |g(y)| - |g(x)|     \le |g(x)-g(y)|, 
\end{multline*}
and, if $n = m $, 
\begin{equation*}
\sum_{k\in \mathbb{Z}} |g_k(x)-g_k(y)| =  |g_m(x) - g_m(y)| =  |g(y)| - |g(x)|    \le |g(x)-g(y)|. 
\end{equation*}
Hence Assertion \eqref{thm1-claim2}  is proved in this case. We next deal with 
Assertion \eqref{thm1-claim2} in the case $|g(x)| = 0$ or $|g(y)| = 0$. This follows from the fact, for $z \in \mR^d$,  
\begin{equation}\label{thm1-claim3}
\sum_{k\in\mathbb{Z}} |g_k(z)| = |g(z)|. 
\end{equation}

Claim \eqref{thm1-claim1} with $j=2$ is just a consequence of \eqref{thm1-claim3} and the fact $q \ge 1$. 

\medskip 
The proof is complete. \qed

\begin{remark}\label{rem-thm1-*}
It is worth noting that \eqref{CSF1} is false if  \eqref{thm1-ass} does not hold, which shows the optimality of \eqref{thm1-ass}. Indeed,  let $a\in \mR^d\setminus\{0\}$ and  $\eta \in C^\infty(\ro^d)\setminus\{0\} $ be such that $0 \le \eta \le 1 $, $\eta = 1$ in $B_{1/8}$, and  $\mbox{supp } \eta \subset B_{1/4}$. For $m\in \mathbb{N}$, define 
$$
v_{m,a}(x)= \sum_{k=1}^m \eta(x+ka)  \mbox{ for }  \ x\in \ro^d.
$$ 
Then $v_{m,a}\in C_c^\infty(\ro^d)$, and for $|a|\to \infty$ we have 
$$
\| v_{m,a}\|_{L^\gamma} \ge C m^{1/ \gamma}, 
$$
$$
\| v_{m,a}\|_{\dot W^{s, p}(\mR^d)}^{\beta_1 p} \le  C m^{\beta_1}, \quad \mbox{ and } \quad  \bct{\int_{\ro^d}\int_{\ro^d} \frac{|v_{m,a}(x)|^q|v_{m,a}(y)|^q}{|x-y|^{d-\alpha}} \, dx \, dy}^{\beta_2} \leq C m^{\beta_2}.  
$$
Here $C$ denotes a positive constant independent of $m$. 
Thus if  \eqref{CSF1} holds then $m  \le C m^{\beta_1\gamma+\beta_2\gamma}.$  This proves the optimality of \eqref{thm1-ass}.
\end{remark}

\subsection{Other forms of the assumptions of the Gagliardo-Nirenberg interpolation inequalities} \label{sect-E}

In this section, we give another form of condition \eqref{thm1-ass}, which is often found in the literature. We begin with (compare with \eqref{range1} for $p=2$)
 
 \begin{lem}\label{pro1-thm} Let $d \ge 1$, $0 \le s \le 1$, $1 \leq \gamma, \, p, \, q <\infty$, $0<\alpha<d$, and $0 \le \beta_1, \, \beta_2 < + \infty$, and assume \eqref{betaEquations}, and $(d+\alpha)p-2q(d-sp)\neq 0$.  Then \eqref{thm1-ass} is equivalent to the fact 
\begin{align}\label{gammaRange-proof}
\begin{cases}
\dsp \frac{p(\alpha+2qs)}{\alpha+sp} \leq \gamma \leq \frac{pd}{d-sp} &\text{ if } sp<d \text{ and } (d+\alpha)p-2q(d-sp)>0, \\[6pt]
\dsp \frac{pd}{d-sp} \leq \gamma \leq  \frac{p(\alpha+2qs)}{\alpha+sp} &\text{ if } sp<d \text{ and } (d+\alpha)p-2q(d-sp)<0, \\[6pt]
\dsp  \frac{p(\alpha+2qs)}{\alpha+sp}\leq \gamma <\infty  &\text{ if } sp\geq d.
\end{cases}
\end{align}
\end{lem}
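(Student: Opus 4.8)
The plan is to reduce the asserted equivalence to a single explicit algebraic identity. First I would treat the two relations in \eqref{betaEquations} as a linear system in the unknowns $\beta_1,\beta_2$ and solve it: writing $A:=(d+\alpha)p-2q(d-sp)$, which is nonzero by hypothesis, one obtains
\[
\beta_1=\frac{\gamma(d+\alpha)-2qd}{\gamma A},\qquad \beta_2=\frac{dp-\gamma(d-sp)}{\gamma A}
\]
(a routine back-substitution confirms these satisfy \eqref{betaEquations}).

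The core of the argument is then the identity
\[
\gamma(\beta_1+\beta_2)-1=\frac{\gamma(\alpha+sp)-p(\alpha+2qs)}{A}=\frac{\alpha+sp}{A}\left(\gamma-\frac{p(\alpha+2qs)}{\alpha+sp}\right),
\]
which follows by adding the two fractions and simplifying the numerator via $d(p-2q)-A=-p(\alpha+2qs)$. Since $\alpha>0$, $s\ge 0$ and $p\ge 1$ give $\alpha+sp>0$, the sign of $\gamma(\beta_1+\beta_2)-1$ equals the product of the signs of $A$ and of $\gamma-\frac{p(\alpha+2qs)}{\alpha+sp}$. Hence \eqref{thm1-ass} holds if and only if either $A>0$ and $\gamma\ge\frac{p(\alpha+2qs)}{\alpha+sp}$, or $A<0$ and $\gamma\le\frac{p(\alpha+2qs)}{\alpha+sp}$.

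It remains to match this dichotomy with the three cases of \eqref{gammaRange-proof}, which is bookkeeping on signs. One first notes that $A<0$ forces $sp<d$ (if $sp\ge d$ then $-2q(d-sp)\ge 0$, so $A\ge (d+\alpha)p>0$), so the three alternatives in \eqref{gammaRange-proof} are precisely the sign patterns $\{sp<d,\ A>0\}$, $\{sp<d,\ A<0\}$ and $\{sp\ge d\}$ (the last forcing $A>0$). The missing endpoint is supplied by the standing hypothesis $\beta_2\ge 0$: from the formula for $\beta_2$ and $\gamma>0$, when $A>0$ one has $\beta_2\ge 0\iff dp\ge\gamma(d-sp)$, i.e.\ $\gamma\le\frac{pd}{d-sp}$ if $sp<d$ and no restriction if $sp\ge d$; when $A<0$ (so $sp<d$) one has $\beta_2\ge 0\iff\gamma\ge\frac{pd}{d-sp}$. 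Feeding these automatic bounds into the dichotomy of the previous paragraph reproduces the three ranges of \eqref{gammaRange-proof} exactly; one also records that $\frac{p(\alpha+2qs)}{\alpha+sp}\le\frac{pd}{d-sp}$ is equivalent to $A\ge 0$, which orders the two endpoints correctly when $sp<d$.

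I do not anticipate a real obstacle: the only genuine computation is the displayed identity above, and the rest is a clean case split on the signs of $A$ and $d-sp$. The one point to watch is that the hypothesis $\beta_1\ge 0$ plays no part in either direction and should simply be left unused; invoking it would only produce the extra lower bound $\gamma\ge\frac{2qd}{d+\alpha}$, which is in fact already implied (indeed $\frac{2qd}{d+\alpha}\le\frac{p(\alpha+2qs)}{\alpha+sp}$ precisely when $A\ge 0$) and hence harmless, but it is cleaner not to introduce it.
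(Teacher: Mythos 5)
Your proposal is correct and follows essentially the same route as the paper: solve \eqref{betaEquations} for $\beta_1,\beta_2$, reduce $\gamma(\beta_1+\beta_2)-1$ to the factored identity $\frac{\alpha+sp}{A}\bigl(\gamma-\frac{p(\alpha+2qs)}{\alpha+sp}\bigr)$, and read off the sign dichotomy, with the nonnegativity of $\beta_2$ supplying the endpoint $\gamma\lessgtr \frac{pd}{d-sp}$ exactly as the paper does (your write-up merely makes that last step, and the fact that $sp\ge d$ forces $A>0$, more explicit). The only nitpick is the parenthetical claim that $\frac{p(\alpha+2qs)}{\alpha+sp}\le\frac{pd}{d-sp}$ is \emph{equivalent} to $A\ge 0$: when $s=0$ both sides equal $p$, so it holds regardless of the sign of $A$; this remark is not used in the argument, so it does not affect correctness.
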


\begin{proof}
Since $(d+\alpha)p-2q(d-sp)\neq 0$,  it follows from \eqref{betaEquations} that 
 \begin{equation}\label{betaValue-proof}
 \beta_1 = \frac{\gamma(d+\alpha)-2qd}{\gamma\bct{p(d+\alpha)-2q(d-sp)}} \quad \mbox{ and } \quad \beta_2 = \frac{pd-\gamma(d-sp)}{\gamma\bct{p(d+\alpha)-2q(d-sp)}}.
 \end{equation}
We then have
 \begin{align*}
   \beta_1 \gamma +\beta_2 \gamma -1&=  \frac{1}{p(d+\alpha)- 2q(d-sp)} \left[\gamma(d+\alpha)- 2qd + pd-\gamma(d-sp) -p(d+\alpha) +2q(d-sp)\right]  \\[6pt] &=  \frac{\alpha+sp}{p(d+\alpha)- 2q(d-sp)}\left[\gamma-\frac{p(\alpha+2qs)}{\alpha+sp} \right].
 \end{align*}
Since $\beta_1$, $\beta_2 >  0$,  we derive the equivalence of \eqref{thm1-ass} and \eqref{gammaRange-proof}.
\end{proof}

We next establish (compare with \eqref{range2} for $p=2$)

\begin{lem}\label{pro2-thm1} Let $d \ge 1$, $0 \le s \le 1$, $1 \leq \gamma, \, p, \, q <\infty$, $0<\alpha<d$, and $0 \le \beta_1, \, \beta_2 < + \infty$, and assume \eqref{betaEquations}, and $(d+\alpha)p-2q(d-sp) =  0$.  Then \eqref{thm1-ass} holds iff the following two conditions hold 
\begin{equation}\label{pro2-thm1-proof-eq1}
\frac{\alpha(d-sp)}{pd(\alpha+sp)} \leq \beta_1 < + \infty, \quad 0\leq \beta_2 \leq \frac{s(d-sp)}{d(\alpha+sp)}. 
\end{equation}
\end{lem}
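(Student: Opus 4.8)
The plan is to proceed exactly as in the proof of \Cref{pro1-thm}, but now exploiting the degenerate relation $(d+\alpha)p-2q(d-sp)=0$ to parametrize the admissible $(\beta_1,\beta_2)$ directly. First I would record the consequences of this degeneracy: since $\beta_1, \beta_2 > 0$ and $p\beta_1 + 2q\beta_2 = 1$ by \eqref{betaEquations}, the relation $(d+\alpha)p = 2q(d-sp)$ forces $d-sp>0$ (as $d+\alpha>0$, $p,q>0$), hence $sp<d$; and the second identity in \eqref{betaEquations}, namely $(d-sp)\beta_1+(d+\alpha)\beta_2 = d/\gamma$, combined with $p\beta_1+2q\beta_2=1$, becomes a single constraint once we substitute $(d+\alpha) = 2q(d-sp)/p$. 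Indeed, $(d-sp)\beta_1 + \frac{2q(d-sp)}{p}\beta_2 = \frac{d-sp}{p}(p\beta_1 + 2q\beta_2) = \frac{d-sp}{p}$, so the second identity in \eqref{betaEquations} is automatically satisfied and merely pins down $\gamma$ via $d/\gamma = (d-sp)/p$, i.e. $\gamma = pd/(d-sp)$. Thus $\gamma$ is fixed and the only free parameter is $\beta_1$ (with $\beta_2 = (1-p\beta_1)/(2q)$), ranging so that $\beta_1 > 0$ and $\beta_2 > 0$.

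Next I would compute $\beta_1\gamma + \beta_2\gamma - 1$ in terms of $\beta_1$ alone and read off the equivalence. Using $\gamma = pd/(d-sp)$ and $\beta_2 = \frac{1}{2q}(1-p\beta_1)$, we get
\begin{align*}
\beta_1\gamma + \beta_2\gamma - 1 &= \frac{pd}{d-sp}\left(\beta_1 + \frac{1-p\beta_1}{2q}\right) - 1 \\
&= \frac{pd}{d-sp}\cdot\frac{2q\beta_1 + 1 - p\beta_1}{2q} - 1 \\
&= \frac{pd\bigl((2q-p)\beta_1 + 1\bigr) - 2q(d-sp)}{2q(d-sp)}.
\end{align*}
Now substitute $2q(d-sp) = p(d+\alpha)$ in the numerator's last term and also in the constant: the numerator becomes $pd(2q-p)\beta_1 + pd - p(d+\alpha) = p\bigl(d(2q-p)\beta_1 - \alpha\bigr)$, hence
$$
\beta_1\gamma + \beta_2\gamma - 1 = \frac{p\bigl(d(2q-p)\beta_1 - \alpha\bigr)}{2q(d-sp)} = \frac{pd(2q-p)}{2q(d-sp)}\left(\beta_1 - \frac{\alpha}{d(2q-p)}\right).
$$
One then checks that $2q - p > 0$: from $2q(d-sp) = p(d+\alpha)$ and $0<\alpha<d$, $sp<d$ we have $2q = p(d+\alpha)/(d-sp) > p$ since $d+\alpha > d-sp$ (as $\alpha > -sp$, which holds because $\alpha>0$). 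Therefore the prefactor is positive, and \eqref{thm1-ass} holds iff $\beta_1 \ge \frac{\alpha}{d(2q-p)}$. Finally I would rewrite $\frac{\alpha}{d(2q-p)}$ in the form appearing in \eqref{pro2-thm1-proof-eq1}: again using $2q = p(d+\alpha)/(d-sp)$, we have $2q - p = \frac{p(d+\alpha) - p(d-sp)}{d-sp} = \frac{p(\alpha+sp)}{d-sp}$, so $\frac{\alpha}{d(2q-p)} = \frac{\alpha(d-sp)}{pd(\alpha+sp)}$, which is exactly the lower bound for $\beta_1$ claimed. The corresponding bound on $\beta_2 = \frac{1-p\beta_1}{2q}$ is obtained by the affine change of variables: $\beta_1 \ge \frac{\alpha(d-sp)}{pd(\alpha+sp)}$ translates to $\beta_2 \le \frac{1}{2q}\bigl(1 - \frac{\alpha(d-sp)}{d(\alpha+sp)}\bigr) = \frac{1}{2q}\cdot\frac{d(\alpha+sp) - \alpha(d-sp)}{d(\alpha+sp)} = \frac{1}{2q}\cdot\frac{sp(\alpha + d)}{d(\alpha+sp)}$, and then replacing $2q$ by $p(d+\alpha)/(d-sp)$ gives $\beta_2 \le \frac{(d-sp)}{p(d+\alpha)}\cdot\frac{sp(d+\alpha)}{d(\alpha+sp)} = \frac{s(d-sp)}{d(\alpha+sp)}$, matching \eqref{pro2-thm1-proof-eq1}. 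One should also note that the constraint $\beta_2 > 0$ (equivalently $\beta_1 < 1/p$) is consistent: $\frac{\alpha(d-sp)}{pd(\alpha+sp)} < \frac1p$ iff $\alpha(d-sp) < d(\alpha+sp)$ iff $0 < dsp + dsp = \dots$, which is clear, so the admissible range is nonempty.

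I do not expect any serious obstacle here — the argument is a routine linear algebra computation following the template of \Cref{pro1-thm}, and the only thing to be careful about is the sign of the prefactor $2q-p$ (and the implicit facts $sp<d$, $\alpha+sp>0$) that make the inequality direction come out right; these all follow from the degeneracy hypothesis and $0<\alpha<d$. The slightly delicate point is bookkeeping the two equivalent descriptions in \eqref{pro2-thm1-proof-eq1} — the bound on $\beta_1$ and the bound on $\beta_2$ are the same condition viewed through the affine relation $p\beta_1 + 2q\beta_2 = 1$ — so I would present the $\beta_1$ inequality as the primary output of the computation and then simply note the equivalent $\beta_2$ form, together with the observation that the endpoints are compatible with $\beta_1,\beta_2>0$.
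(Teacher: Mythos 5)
Your proof is correct and follows essentially the same route as the paper: the degeneracy pins down $\gamma=pd/(d-sp)$, one writes $\beta_1\gamma+\beta_2\gamma-1$ as an affine function of $\beta_1$ with positive slope, and the threshold is exactly $\alpha(d-sp)/(pd(\alpha+sp))$, with the $\beta_2$ bound obtained from the affine relation between $\beta_1$ and $\beta_2$. The only blemish is the garbled final sanity check (the correct identity is $d(\alpha+sp)-\alpha(d-sp)=sp(d+\alpha)$, not $dsp+dsp$), which is a side remark and does not affect the equivalence.
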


\begin{proof}
Since $(d+\alpha)p-2q(d-sp) =  0$, it follows that  $sp<d$ and $q= \frac{p(d+\alpha)}{2(d-sp)}$. By \eqref{betaEquations}, we have 
$$(d-sp)\beta_1 +(d+\alpha) \beta_2  =\frac{d}{\gamma} \quad \mbox{ and } \quad \beta_1 +\frac{d+\alpha}{d-sp}\beta_2 = \frac{1}{p}.$$ 
This yields 
$$
\gamma= \frac{pd}{d-sp}. 
$$
We then have 
\begin{align*}
\beta_1\gamma+ \beta_2\gamma -1 &=  \frac{\beta_1 pd}{d-sp}-\frac{\beta_1pd}{d+\alpha} +\frac{d}{d+\alpha}-1 = \frac{\beta_1 (\alpha+sp)pd}{(d-sp)(d+\alpha)} -\frac{\alpha}{d+\alpha}.
\end{align*}
Therefore $\beta_1\gamma+\beta_2\gamma \geq 1$ if and only if $\beta_1\geq \frac{\alpha(d-sp)}{pd(\alpha+sp)}$. Since $\beta_1 +\frac{d+\alpha}{d-sp}\beta_2 = \frac{1}{p}$,   we derive that $\beta_1\geq \frac{\alpha(d-sp)}{pd(\alpha+sp)}$  if and only if  \eqref{pro2-thm1-proof-eq1} holds. 
\end{proof}

\section{Caffarelli-Kohn-Nirenberg interpolation type inequalities involving Coulomb terms - Proof of \Cref{thm2}}\label{sect-CKN}

The main goal of this section is to prove Theorem \ref{thm2}. We closely follow the techniques introduced by Nguyen and Squassina in \cite{HmnSqa} (see also \cite{Ng-Squ3}) to derive our results. We begin with a consequence of \Cref{thm1}. 

\begin{lem}\label{domSob}
Let $d\geq 1$, $0 \le  s \le  1$,  $1 < \gamma < + \infty$, $1\leq \gamma', \, p, \, q < + \infty$, $0<\alpha<d$, and $0 <  \beta_1,  \, \beta_2 < + \infty$.  Assume that  \eqref{betaEquations} and   \eqref{thm1-ass} hold, and 
$$
\gamma \ge \gamma'. 
$$ 
Let $\lambda>0$ and $0<r<R$,  and set 
$$
D_\lambda := \Sb{x\in \ro^d: \lambda r <|x|<\lambda R}.
$$
We have, for $g \in L^1(D_{\lambda})$, 
\begin{align}
\bct{\fint_{D_\lambda} |g-(g)_{D_{\lambda}}|^{\gamma'}}^{\frac{1}{\gamma'}}\notag \leq  \frac{C}{\lambda^{\frac{d}{\gamma}}} \nrm{g}_{\wsp(D_\lambda)}^{\beta_1p} \bct{\int_{D_\lambda}\int_{D_\lambda} \frac{|g(x)|^q|g(y)|^q}{|x-y|^{d-\alpha}} \, dx \, dy}^{\beta_2}, 
\end{align}
for some positive constant $C$ independent of $\lambda$ and $g$. 
\end{lem}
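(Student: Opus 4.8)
The plan is to strip off the dilation parameter $\lambda$ by scaling and then deduce the estimate on the fixed annulus $D_1=\{x\in\ro^d:r<|x|<R\}$ from \Cref{thm1} via an extension argument. For the scaling step, given $g\in L^1(D_\lambda)$ I set $g_\lambda(x)=g(\lambda x)\in L^1(D_1)$; a change of variables gives $(g_\lambda)_{D_1}=(g)_{D_\lambda}$, $\fint_{D_1}|g_\lambda-(g_\lambda)_{D_1}|^{\gamma'}=\fint_{D_\lambda}|g-(g)_{D_\lambda}|^{\gamma'}$, and, for every $0\le s\le 1$, $\|g_\lambda\|_{\wsp(D_1)}=\lambda^{s-d/p}\|g\|_{\wsp(D_\lambda)}$ together with $\int_{D_1}\int_{D_1}\frac{|g_\lambda(x)|^q|g_\lambda(y)|^q}{|x-y|^{d-\alpha}}=\lambda^{-(d+\alpha)}\int_{D_\lambda}\int_{D_\lambda}\frac{|g(x)|^q|g(y)|^q}{|x-y|^{d-\alpha}}$. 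Since \eqref{betaEquations} gives $(sp-d)\beta_1-(d+\alpha)\beta_2=-d/\gamma$, the powers of $\lambda$ produced on the right-hand side combine to exactly $\lambda^{-d/\gamma}$, so it suffices to prove the inequality for $\lambda=1$.

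For the case $\lambda=1$ I may assume the right-hand side is finite, so $g\in\wsp(D_1)$, and I put $\bar g:=g-(g)_{D_1}$; since $D_1$ is a bounded connected Lipschitz domain, the (fractional) Poincar\'e inequality gives $\|\bar g\|_{L^p(D_1)}\le C\|g\|_{\wsp(D_1)}$. I then extend $\bar g$ to a compactly supported $\tilde g\in L^1(\ro^d)$, supported in a fixed annulus $A\supset\overline{D_1}$, by reflecting radially across the two bounding spheres $|x|=r$, $|x|=R$ and multiplying by a cut-off equal to $1$ on $\overline{D_1}$ (hence $\tilde g=\bar g$ on $D_1$). Because these reflections are explicit bi-Lipschitz maps, the product rule for the cut-off together with the Poincar\'e bound yields $\|\tilde g\|_{\wsp(\ro^d)}\le C\big(\|\bar g\|_{\wsp(D_1)}+\|\bar g\|_{L^p(D_1)}\big)\le C\|g\|_{\wsp(D_1)}$, and, decomposing $A$ into $D_1$ and the two reflected collars and changing variables, $\int_{\ro^d}\int_{\ro^d}\frac{|\tilde g(x)|^q|\tilde g(y)|^q}{|x-y|^{d-\alpha}}\le C\int_{D_1}\int_{D_1}\frac{|\bar g(x)|^q|\bar g(y)|^q}{|x-y|^{d-\alpha}}$. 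Since $D_1$ has finite diameter, $|x-y|^{-(d-\alpha)}$ is bounded below on $D_1\times D_1$, whence $\int_{D_1}\int_{D_1}\frac{|g(x)|^q|g(y)|^q}{|x-y|^{d-\alpha}}\ge c\,\|g\|_{L^q(D_1)}^{2q}$; combining this with $|(g)_{D_1}|\le C\|g\|_{L^q(D_1)}$ and $|x-y|^{-(d-\alpha)}\in L^1_{loc}$ gives $\int_{D_1}\int_{D_1}\frac{|\bar g(x)|^q|\bar g(y)|^q}{|x-y|^{d-\alpha}}\le C\int_{D_1}\int_{D_1}\frac{|g(x)|^q|g(y)|^q}{|x-y|^{d-\alpha}}$. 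Now \Cref{thm1} applies to $\tilde g$ — its hypotheses are precisely \eqref{betaEquations} and \eqref{thm1-ass}, which are assumed here — and using $\|\tilde g\|_{L^\gamma(\ro^d)}\ge\|\bar g\|_{L^\gamma(D_1)}$ it gives $\|\bar g\|_{L^\gamma(D_1)}\le C\|g\|_{\wsp(D_1)}^{\beta_1 p}\big(\int_{D_1}\int_{D_1}\frac{|g(x)|^q|g(y)|^q}{|x-y|^{d-\alpha}}\big)^{\beta_2}$; finally, since $\gamma\ge\gamma'$ and $|D_1|$ is a fixed constant, H\"older's inequality yields $\big(\fint_{D_1}|\bar g|^{\gamma'}\big)^{1/\gamma'}\le C\|\bar g\|_{L^\gamma(D_1)}$, which is the claimed bound.

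I would not write out the routine parts (the scaling identities, the product rule with the cut-off, the H\"older step). The one step that genuinely needs care — and the reason a black-box Sobolev extension operator does not obviously suffice, unlike in \cite{HmnSqa} where no Coulomb term is present — is controlling the Coulomb energy of the extension $\tilde g$ by that of $g$ on $D_1$. This forces the use of an extension with explicit geometry, so that the double integral transforms well under the change of variables, and it also requires absorbing both the subtracted constant $(g)_{D_1}$ and the cross terms it generates, which is exactly where the elementary observation that on a bounded domain the Coulomb energy dominates $\|g\|_{L^q}^{2q}$ is used.
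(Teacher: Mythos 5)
Your proof is correct and follows the same route as the paper: reduce to $\lambda=1$ via the scaling identity that \eqref{betaEquations} provides, then deduce the annulus estimate from \Cref{thm1}. The paper dispatches the second step in a single sentence ("\Cref{domSob} is now a consequence of \Cref{thm1}"), whereas you supply the actual localization argument (mean subtraction, fractional Poincar\'e, reflection extension with control of both the Gagliardo seminorm and the Coulomb energy, then H\"older from $L^\gamma$ down to $L^{\gamma'}$); the only caveat is your appeal to connectedness of $D_1$, which fails when $d=1$ (the "annulus" is then two disjoint intervals, so the $s=1$ Poincar\'e step breaks), an edge case that the paper's terse proof glosses over as well.
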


\begin{proof} 
Using \eqref{betaEquations}, by scaling we can assume that $\lambda=1$. \Cref{domSob} is now a consequence of \Cref{thm1}. 
\end{proof}

In the next two subsections, we present the proof $(i)$ and $(ii)$ of  Theorem \ref{thm2}, respectively. 

\subsection{Proof of $(i)$ of  Theorem \ref{thm2}} We are ready to give the proof of \Cref{thm2}. We closely follow  the strategy in \cite{HmnSqa}. We only consider the case $0 < s < 1$, the proof in general case follows similarly and is omitted. 

\medskip 

 The proof is divided into two steps. 

\begin{itemize}
\item Step 1: We establish $(i)$ of  Theorem \ref{thm2} assuming \eqref{thm2-cd4-1}. 

\item Step 2: We establish $(i)$ of  Theorem \ref{thm2} assuming  \eqref{thm2-cd4-2} and $\gamma' < \gamma$.

\end{itemize}

It is clear that  Assertion $(i)$ then follows from Steps 1 and  2. 

\medskip 
We now proceed Steps 1 and 2.

\medskip 
\noindent \textit{Step 1:}  We establish $(i)$ of  Theorem \ref{thm2} assuming \eqref{thm2-cd4-1}. 

Set 
$$
\mcal{A}_k : = \Sb{x\in \ro^d: 2^k\leq |x| <2^ {k+1}}.
$$ 
By \Cref{domSob}, we derive from \eqref{thm2-cd3}  that 
\begin{multline}\label{thm2-p1}
\bct{\fint_{\mcal{A}_k} \left|g-(g)_{\mcal{A}_k}\right|^{\gamma'}}^{\frac{1}{\gamma'}}  \leq  \frac{C}{2^{\frac{dk}{\gamma} }} \bct{\int_{\mcal{A}_k}\int_{\mcal{A}_k} \frac{|g(x)-g(y)|^p}{|x-y|^{d+sp}}dxdy}^{\beta_1} \\[6pt] 
\times \bct{\int_{\mcal{A}_k}\int_{\mcal{A}_k} \frac{|g(x)|^q|g(y)|^q}{|x-y|^{d-\alpha}}dxdy}^{\beta_2}. 
\end{multline}
Here and in what follows in the proof of \Cref{thm2}, $C$ denotes a positive constant independent of $g$ and $k$ (and also independent of $m$, and $n$, which appear later).  Since 
$$
2^{ \tau'\gamma' k} \int_{\mcal{A}_k} |g|^{\gamma'}  \le C  2^{(\tau' \gamma'+d)k} \fint_{\mcal{A}_k} \left|g-(g)_{\mcal{A}_k}\right|^{\gamma'} + C 2^{(\tau' \gamma'+d)k}\md{\fint_{\mcal{A}_k}g}^{\gamma'}, 
$$
using condition \eqref{betaEquations-m} and the definition of $\sigma$, we derive that 
\begin{multline}\label{sameAnnEst}
 \int_{\mcal{A}_k} |g|^{\gamma'} |x|^{\tau' \gamma'} \, dx   \leq  C 2^{(\gamma'\tau'+d)k}\md{\fint_{\mcal{A}_k}g}^{\gamma'}  \\[6pt]
 + C \bct{\int_{\mcal{A}_k}\int_{\mcal{A}_k} \frac{|g(x)-g(y)|^p |x|^{\alpha_{1, 1} p }  |y|^{\alpha_{1, 2} p}}{|x-y|^{d+sp}} \, dx \, dy}^{\gamma'\beta_1}  \\[6pt] 
 \times  \bct{\int_{\mcal{A}_k}\int_{\mcal{A}_k} \frac{|g(x)|^q|g(y)|^q |x|^{\alpha_{2, 1} q} |y|^{\alpha_{2, 2} q}}{|x-y|^{d-\alpha}} \, dx \, dy}^{\gamma'\beta_2}. 
\end{multline}

Let $m, \, n\in \mathbb{Z}$ be such that $m\leq n-2$. Summing \eqref{sameAnnEst} with respect to $k$ from $m$ to $n$,  we get
\begin{multline}\label{thm2-betaEquations-m}
\int_{\Sb{2^m<|x|<2^{n+1}}} |g|^{\gamma'} |x|^{\tau'\gamma'}   \leq  C\sum_{k=m}^n 2^{(\gamma'\tau'+d)k}\md{\fint_{\mcal{A}_k}g}^{\gamma'} \\[6pt]
+ C\sum_{k=m}^n \bct{\int_{\mcal{A}_k}\int_{\mcal{A}_k} \frac{|g(x)-g(y)|^p |x|^{\alpha_{1, 1} p }  |y|^{\alpha_{1, 2} p} }{|x-y|^{d+sp}}\, dx \, dy}^{\gamma'\beta_1} \\[6pt]
\times \bct{\int_{\mcal{A}_k}\int_{\mcal{A}_k} \frac{|g(x)|^q|g(y)|^q  |x|^{\alpha_{2, 1} q} |y|^{\alpha_{2, 2} q} }{|x-y|^{d-\alpha}}\, dx \, dy}^{\gamma'\beta_2}. 
\end{multline}
Applying \Cref{lem-Ineq}, we derive from \eqref{thm2-betaEquations-m} that  
\begin{multline}\label{thm2-p2}
\int_{\Sb{2^m<|x|<2^{n+1}}} |g|^{\gamma'} |x|^{\tau'\gamma'}     \leq  C\sum_{k=m}^n 2^{(\gamma'\tau'+d)k}\md{\fint_{\mcal{A}_k}g}^{\gamma'} \\[6pt]+ C  \left( \int_{\mR^d} \int_{\mR^d} \frac{|g(x) - g(y)|^p |x|^{\alpha_{1,1}p } |y|^{\alpha_{1, 2} p } }{|x - y|^{d + sp }}  \, dx \, dy \right)^{\gamma'\beta_1}  \\[6pt]
\times \left( \int_{\mR^d}\int_{\mR^d} \frac{|g(x)|^q|g(y)|^q  |x|^{\alpha_{2, 1} q} |y|^{\alpha_{2, 2} q} }{|x-y|^{d-\alpha}}\, dx \, dy \right)^{\gamma'\beta_2 }.  
\end{multline}

We next estimate the first term of the RHS of \eqref{thm2-p2}. We have, as in \eqref{thm2-p1},
\begin{multline}\label{thm2-p3}
\md{\fint_{\mcal{A}_k} g-\fint_{\mcal{A}_{k+1}}g}^{\gamma'} \leq   \frac{C}{2^{\frac{dk}{\gamma}}}  \bct{\int_{\mcal{A}_k\cup \mcal{A}_{k+1}}\int_{\mcal{A}_k\cup\mcal{A}_{k+1}} \frac{|g(x)-g(y)|^p}{|x-y|^{d+sp}} \, dx \, dy}^{\gamma'\beta_1} \\[6pt] \times \bct{\int_{\mcal{A}_k\cup \mcal{A}_{k+1}}\int_{\mcal{A}_k\cup\mcal{A}_{k+1}} \frac{|g(x)|^q|g(y)|^q }{|x-y|^{d-\alpha}} \, dx \, dy}^{\gamma'\beta_2}. 
\end{multline}
 With $c: =2/(1+2^{\gamma'\tau'+d})<1$ (since $\gamma' \tau' + d > 0$), we have $c 2^{\tau' \gamma'  + d } > 1$. We derive from \eqref{thm2-p3} that 
\begin{multline}\label{thm2-coucou}
2^{(\gamma'\tau'+d)k} \md{\fint_{\mcal{A}_k} g}^{\gamma'} \leq c 2^{(\gamma'\tau'+d)(k+1)} \md{\fint_{\mcal{A}_{k+1}}g} ^{\gamma'} \\[6pt]
+C \bct{\int_{\mcal{A}_k\cup\mcal{A}_{k+1}}\int_{\mcal{A}_k\cup\mcal{A}_{k+1}} \frac{|g(x)-g(y)|^p |x|^{\alpha_{1, 1} p }  |y|^{\alpha_{1, 2} p} }{|x-y|^{d+sp}}dxdy}^{\gamma'\beta_1} \\[6pt] \times \bct{\int_{\mcal{A}_k\cup\mcal{A}_{k+1}}\int_{\mcal{A}_k\cup\mcal{A}_{k+1}} \frac{|g(x)|^q |g(y)|^q |x|^{\alpha_{2, 1} q} |y|^{\alpha_{2, 2} q}}{|x-y|^{d-\alpha}}dxdy}^{\gamma'\beta_2}.
\end{multline}
Since $g$ has a compact support, we derive that, for large $n$, 
\begin{multline}\label{sumAvgEst}
\sum_{k=m}^n 2^{(\gamma'\tau'+d)k} \md{\fint_{\mcal{A}_k} g}^{\gamma'} \leq C \sum_{k=m}^n \bct{\int_{\mcal{A}_k\cup\mcal{A}_{k+1}}\int_{\mcal{A}_k\cup\mcal{A}_{k+1}} \frac{|g(x)-g(y)|^p |x|^{\alpha_{1, 1} p }  |y|^{\alpha_{1, 2} p} }{|x-y|^{d+sp}}dxdy}^{\gamma'\beta_1} \\[6pt] \times \bct{\int_{\mcal{A}_k\cup\mcal{A}_{k+1}}\int_{\mcal{A}_k\cup\mcal{A}_{k+1}} \frac{|g(x)|^q |g(y)|^q |x|^{\alpha_{2, 1} q} |y|^{\alpha_{2, 2} q}}{|x-y|^{d-\alpha}}dxdy}^{\gamma'\beta_2}. 
\end{multline}
Applying \Cref{lem-Ineq} and letting $m \to - \infty$, we obtain 
\begin{multline}\label{thm2-p4}
\sum_{k \in \mZ} 2^{(\gamma'\tau'+d)k} \md{\fint_{\mcal{A}_k} g}^{\gamma'} \le 
C \left( \int_{\mR^d} \int_{\mR^d} \frac{|g(x) - g(y)|^p |x|^{\alpha_{1,1}p } |y|^{\alpha_{1, 2} p } }{|x - y|^{d + sp }}  \, dx \, dy \right)^{\gamma'\beta_1} \\[6pt] 
\times \left( \int_{\mR^d}\int_{\mR^d} \frac{|g(x)|^q|g(y)|^q  |x|^{\alpha_{2, 1} q} |y|^{\alpha_{2, 2} q} }{|x-y|^{d-\alpha}}\, dx \, dy \right)^{\gamma'\beta_2}.  
\end{multline}

Combining \eqref{thm2-p2} and \eqref{thm2-p4} and letting $n \to + \infty$, $m \to - \infty$, we obtain $(i)$ of \Cref{thm2}. The proof of Step 1 is complete.

\medskip 
\noindent \textit{Step 2:}  We establish $(i)$ of  Theorem \ref{thm2} assuming   \eqref{thm2-cd4-2} and $\gamma' < \gamma$.

Since $\frac{1}{p} (sp - d - \alpha_1 p)  + \frac{1}{2q}(\alpha + d + \alpha_2 q)  \neq 0$, by scaling, without loss of generality, one might assume that 
\begin{equation}\label{thm2-Step-p1}
\int_{\mR^d} \int_{\mR^d} \frac{|g(x) - g(y)|^p |x|^{\alpha_{1,1}p } |y|^{\alpha_{1, 2} p } }{|x - y|^{d + sp }}  \, dx \, dy  = \int_{\mR^d}\int_{\mR^d} \frac{|g(x)|^q|g(y)|^q  |x|^{\alpha_{2, 1} q} |y|^{\alpha_{2, 2} q} }{|x-y|^{d-\alpha}}\, dx \, dy = 1. 
\end{equation}
It then suffices  to prove that
\begin{equation}\label{thm2-Step-p0}
\| |\cdot|^{\tau'} g\|_{L^{\gamma'}(\mR^d)} \le C. 
\end{equation}

Let $\beta_{1, 1}, \beta_{1, 2}$ and $\beta_{2, 1}, \beta_{2, 2}$ close to $\beta_1$ and $\beta_2$ respectively and non-negative be determined later such that $\beta_{j, 1} p + 2 \beta_{j, 2} q = 1$, which is equivalent to 
$$
p(\beta_{j, 1} - \beta_1) + 2q (\beta_{j, 2} - \beta_2)  = 0. 
$$
 
Define $\sigma_j$, $\gamma_j$, $\gamma_j'$, and $\tau_j$  for $j = 1, 2$,  as follows 
$$
\sigma_j = \beta_{j, 1} p \alpha_1 + \beta_{j, 2} q \alpha_2, 
$$
$$
(d- sp) \beta_{j, 1} + (d+ \alpha) \beta_{j, 2} = \frac{d}{\gamma_j}. 
$$
$$
\gamma_j' = \gamma_j, \quad \tau_j' = \sigma_j.  
$$

We have 
\begin{multline}
\frac{1}{\gamma_j'} + \frac{\tau_j'}{d} - \frac{1}{\gamma'} - \frac{\tau'}{d} = \frac{1}{\gamma_j}  + \frac{\sigma_j}{d} - \frac{1}{\gamma} - \frac{\sigma}{d} \\[6pt]=
\frac{1}{d} \Big( (d- sp + p \alpha_1) (\beta_{j, 1}- \beta_1) + (d+ \alpha + q \alpha_2) (\beta_{j, 2} - \beta_2) \Big). 
\end{multline}

Since $2q(d- sp + p \alpha_1) \neq p (d+ \alpha + q \alpha_2)$  and $\beta_1, \beta_2 > 0$,  one can choose positive $\beta_{j, 1}$ and $\beta_{j, 2}$ close to $\beta_1$ and $\beta_2$ such that 
\begin{equation}\label{thm2-Step2-p1*}
p(\beta_{j, 1} - \beta_1) + 2q (\beta_{j, 2} - \beta_2)  = 0, 
\end{equation}
\begin{equation}\label{thm2-Step2-p2*}
\frac{1}{\gamma_2'} + \frac{\tau_2'}{d} < \frac{1}{\gamma'} + \frac{\tau'}{d}<\frac{1}{\gamma_1'} + \frac{\tau_1'}{d}. 
\end{equation}

Since $\beta_{j, 1}$ and $\beta_{j, 2}$ are close to $\beta_1$ and $\beta_2$ and $\beta_1 \gamma + \beta_2 \gamma  > 1$, we have 
\begin{equation}\label{thm2-Step2-p3*}
\beta_{j, 1} \gamma_j' + \beta_{j, 2} \gamma_j'  = \beta_{j, 1} \gamma_j + \beta_{j, 2} \gamma_j  > 1 \mbox{ for } j =1, 2,  
\end{equation}
and 
\begin{equation}\label{thm2-Step2-p4*}
\gamma_j' = \gamma_j > \gamma' \mbox{ for } j = 1, 2. 
\end{equation}

Combining \eqref{thm2-Step2-p2*} and \eqref{thm2-Step2-p4*} yields 
\begin{equation}\label{thm2-Step-p2}
\| |\cdot|^{\tau'} g\|_{L^{\gamma'}(\mR^d \setminus B_1)} \le C \| |\cdot|^{\tau_1'} g\|_{L^{\gamma_1'}(\mR^d \setminus B_1)}. 
\end{equation}
and
\begin{equation}\label{thm2-Step-p3}
\| |\cdot|^{\tau'} g\|_{L^{\gamma'}(B_1)} \le C \| |\cdot|^{\tau_2'} g\|_{L^{\gamma_2'}(B_1)}
\end{equation}
On the other hand, applying Step 1, we have, by \eqref{thm2-Step-p1}, 
\begin{equation}\label{thm2-Step-p4}
 \| |\cdot|^{\tau_1'} g\|_{L^{\gamma_1'}(\mR^d)} \le C 
 \quad \mbox{ and } \quad 
 \| |\cdot|^{\tau_2'} g\|_{L^{\gamma_2'}(\mR^d)} \le C. 
\end{equation}
Combining \eqref{thm2-Step-p1}, \eqref{thm2-Step-p2}, and \eqref{thm2-Step-p3} yields \eqref{thm2-Step-p0}. The proof of Step 2 is complete.

\medskip 
The proof of $(i)$ of \Cref{thm2} is complete. \qed

\subsection{Proof of $(ii)$ of \Cref{thm2}}  The proof of $(ii)$ of \Cref{thm2} is similar to that of (i). We only mention briefly the proof of $(ii)$ assuming \eqref{thm2-cd4-1}.  We also have \eqref{thm2-p2}. To estimate the RHS of \eqref{thm2-p2}, one just needs to note that, instead of \eqref{thm2-coucou}, 
with $\hat c:=(1+2^{\gamma'\tau'+d})/2<1$ (since $\gamma' \tau'  + d <  0$), 
\begin{multline*}
2^{(\gamma'\tau'+d)(k+1)} \md{\fint_{\mcal{A}_{k+1}} g}^{\gamma'}   \leq \hat c 2^{(\gamma'\tau'+d)k} \md{\fint_{\mcal{A}_{k}}g} ^{\gamma'} \\[6pt]
+C \bct{\int_{\mcal{A}_k\cup\mcal{A}_{k+1}}\int_{\mcal{A}_k\cup\mcal{A}_{k+1}} \frac{|g(x)-g(y)|^p \varphi_{\alpha_1, p}(x, y) }{|x-y|^{d+sp}}dxdy}^{\gamma'\beta_1} \\[6pt] \times \bct{\int_{\mcal{A}_k\cup\mcal{A}_{k+1}}\int_{\mcal{A}_k\cup\mcal{A}_{k+1}} \frac{|g(x)|^q|g(y)|^q \varphi_{\alpha_2, q}(x, y)}{|x-y|^{d-\alpha}}dxdy}^{\gamma'\beta_2}. \end{multline*}
Summing with respect to $k$, we also obtain \eqref{thm2-p4}. The conclusion now follows from \eqref{thm2-p2} and \eqref{thm2-p4}.

\section{Hardy-Lieb-Therring Inequalities} \label{sect-P}

In this section, as an application of \Cref{thm2}, we establish the following family of many body Hardy-Lieb-Therring inequalities. 

\begin{thm}\label{thm4}
Let $d \ge 1$ and $N \ge 1$, $0<s \le 1$ and  $2\leq p<\infty$ be such that $sp<d$. Given $\psi \in C_c^\infty\bct{\ro^{dN}}$ with $\int_{\ro^{dN}} |\psi(X)|^p \, dX =1$, define 
\begin{equation}\label{density}
\rho\psi(x):=\sum_{i=1}^N \int_{(\ro^d)^{N-1}} \md{\psi(\Xr_i,x,\Xl_i)}^p d\Xr_i d\Xl_i,
\end{equation}
 where we have denoted, for $X = (x_1, \cdots, x_N)$ with $x_i \in \mR^d$,  
 \begin{equation}\label{thm4-notation}
\mbox{ $\Xr_i= (x_1,\dots,x_{i-1})$ and $\Xl_i=(x_{i+1},\dots,x_N)$ for $1 \le i \le N$.  } 
\end{equation}
Then there exists a positive constant $C$ depending only on $s$, $p$, $d$ ($C$ is independent of $N$) such that
\begin{equation}
E (\psi) +\sum_{1\leq i<j\leq N} \int_{\ro^{dN}} \frac{|\psi(X)|^p dX}{|x_i-x_j|^{sp}} \geq C \int_{\ro^d} \md{\rho\psi (x)}^{1+\frac{sp}{d}} \, dx. \label{thm4-ineq}
\end{equation}
Here, for $0< s< 1$,  
\begin{multline*}
E (\psi): = \sum_{i=1}^N  \int_{\ro^{d(N-1)}} \left[\int_{\ro^d} \int_{\ro^d}\frac{\md{\psi(\Xr_i,x_i,\Xl_i) -  \psi(\Xr_i,y_i,\Xl_i)}^p dx_idy_i}{\md{x_i-y_i}^{d+sp}} \right. \\[6pt] \left.- \mcal{C}_{d,s,p}\int_{\ro^d} \frac{\md{\psi(\Xr_i,x_i,\Xl_i)}^pdx_i}{\md{x_i}^{sp}} \right]d\Xr_id\Xl_i, 
\end{multline*}
and, for $s=1$,  
\begin{equation*}
E (\psi): = \sum_{i=1}^N \int_{\ro^{d(N-1)}} \left[\int_{\mR^d} |\nabla_{x_i} \psi(X_i^R, x_i, X_i^L)|^p \, dx_i  - \mcal{C}_{d,s,p}\int_{\ro^d} \frac{\md{\psi(\Xr_i,x_i,\Xl_i)}^pdx_i}{\md{x_i}^{sp}} \right] d\Xr_id\Xl_i.   
\end{equation*}
\end{thm}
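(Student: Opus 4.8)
\textbf{Proof proposal for \Cref{thm4}.}

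The plan is to follow the two-step strategy of Lundholm, Nam, and Portmann: first prove a one-body Hardy--Lieb--Thirring inequality with a strong remainder term using \Cref{thm2} together with the sharp fractional Hardy inequality with remainder due to Frank and Seiringer, and then lift it to the many-body setting via an induction-in-$N$ argument driven by the two-particle repulsion term. For the one-body step, I would specialize \Cref{thm2} to parameters for which the conclusion \eqref{thm2-cl} reads schematically
\begin{equation*}
\Big( \int_{\mR^d} |u|^{\gamma'} \Big)^{1/\gamma'} \le C \Big( \int_{\mR^d}\int_{\mR^d} \frac{|u(x)-u(y)|^p}{|x-y|^{d+sp}}\,dx\,dy \Big)^{\beta_1 p} \Big( \int_{\mR^d}\int_{\mR^d} \frac{|u(x)|^q |u(y)|^q}{|x-y|^{d-\alpha}}\,dx\,dy \Big)^{\beta_2},
\end{equation*}
i.e. with no weights ($\tau' = 0$, all $\alpha_{i,j} = 0$, hence $\sigma = 0$), which requires $d - sp > 0$ so that $\gamma' = pd/(d-sp)$ is available, and then use a duality/bathtub argument to convert the nonlinear Gagliardo--Nirenberg-type bound into the kinetic-energy estimate $\int \frac{|u(x)-u(y)|^p}{|x-y|^{d+sp}} \gtrsim \int |u|^p \rho^{sp/d}$ when $u$ is (a slice of) $\psi$ and $\rho$ its density — this is where the exponent $1 + sp/d$ in \eqref{thm4-ineq} comes from. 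The sharp Hardy inequality with remainder of Frank--Seiringer is used to absorb the negative term $-\mcal{C}_{d,s,p}\int \frac{|u|^p}{|x|^{sp}}$ in the definition of $E(\psi)$: it guarantees that $\int \frac{|u(x)-u(y)|^p}{|x-y|^{d+sp}} - \mcal{C}_{d,s,p}\int \frac{|u|^p}{|x|^{sp}}$ still controls a positive, scale-invariant Gagliardo--Seminorm-type quantity (on a conformally equivalent space or after a ground-state substitution), so one still has enough coercivity to run the above argument.

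The many-body step is the induction of Lundholm--Nam--Portmann. Writing $\rho_\psi$ as a sum of one-particle densities via \eqref{density}, one splits the configuration space according to whether the density $\rho_\psi$ is ``large'' or ``small'' near a given particle; on the region where no other particle is close, the one-body inequality applied slice-by-slice in $x_i$ (integrating out $\Xr_i, \Xl_i$) already gives the desired $\int \rho_\psi^{1+sp/d}$; on the region where particles cluster, the repulsive interaction $\sum_{i<j}\int \frac{|\psi|^p}{|x_i-x_j|^{sp}}$ provides the missing positive quantity — one covers the high-density region by balls, uses the interaction term to bound the number of particles per ball from below in terms of the local mass, and feeds this into a dyadic/covering decomposition so that the local contributions sum to the global $L^{1+sp/d}$ norm of $\rho_\psi$. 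Superadditivity of $t \mapsto t^{1+sp/d}$ and a careful bookkeeping of the constants (which must remain independent of $N$) then close the induction.

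The main obstacle I anticipate is the interface between the two regimes in the many-body step: making the covering/localization argument produce a constant genuinely independent of $N$ requires the repulsion term to compensate exactly the loss incurred when one restricts the one-body inequality to a ball and discards cross-terms between particles in different balls, and in the $L^p$ scale with $p \ne 2$ one cannot rely on the Hilbert-space orthogonality tricks available in \cite{LunNamPor}. Concretely, the fractional seminorm $\int\int \frac{|\psi(x)-\psi(y)|^p}{|x-y|^{d+sp}}$ is not additive over a partition of $\mR^d$ (there is a nonnegative but nontrivial interaction across the partition boundary), so one must either work with the localized seminorm on each piece and control the error, or exploit the Frank--Seiringer remainder to have room to spare; getting this quantitative estimate right, uniformly in $N$ and compatibly with the Hardy subtraction in $E(\psi)$, is the crux. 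The $s=1$ case should be strictly easier here since $|\nabla\psi|^p$ \emph{is} additive over a partition, so I would treat it first as a warm-up and then handle $0<s<1$ by the nonlocal modifications.
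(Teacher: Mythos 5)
Your high-level plan names the right ingredients (Theorem \ref{thm2}, the Frank--Seiringer Hardy inequality with remainder, and the Lundholm--Nam--Portmann strategy), but both steps as you describe them diverge from what actually works, and the second step is left with an acknowledged unresolved crux rather than an argument.

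For the one-body step, specializing Theorem \ref{thm2} to the unweighted case ($\tau'=0$, all $\alpha_{i,j}=0$) cannot absorb the Hardy subtraction. The point of the Frank--Seiringer remainder estimate is that, after the ground-state substitution $\varphi(x)=|x|^{(d-sp)/p}u(x)$, the quantity $\int\!\!\int\frac{|u(x)-u(y)|^p}{|x-y|^{d+sp}}-\mcal{C}_{d,s,p}\int\frac{|u|^p}{|x|^{sp}}$ controls a \emph{weighted} seminorm of $\varphi$ with weights $|x|^{-(d-sp)/2}|y|^{-(d-sp)/2}$, and the target norm $\int|u|^{p(d+sp)/d}$ is a \emph{weighted} $L^{\gamma'}$ norm of $\varphi$ with $\tau'=-(d-sp)/p$. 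So you must invoke the weighted inequality \eqref{thm2-cl} with nonzero $\alpha_{1,1}=\alpha_{1,2}=-\frac{d-sp}{2p}$, $\alpha_{2,1}=\alpha_{2,2}=-\frac{d-sp}{p}$, $q=p$, $\alpha=d-sp$; the unweighted version applied to $u$ gives you nothing once the Hardy term is subtracted. Moreover, the correct one-body statement is not a pointwise kinetic bound $\int\!\!\int\frac{|u(x)-u(y)|^p}{|x-y|^{d+sp}}\gtrsim\int|u|^p\rho^{sp/d}$ obtained by duality; it is the product inequality
\begin{equation*}
F(u)^{1-\frac{sp}{d}}\Bigl(\int_{\ro^d}\int_{\ro^d}\frac{|u(x)|^p|u(y)|^p}{|x-y|^{sp}}\,dx\,dy\Bigr)^{\frac{sp}{d}}\ \ge\ C\int_{\ro^d}|u|^{\frac{p(d+sp)}{d}},
\end{equation*}
in which the Coulomb self-interaction factor is indispensable: it is exactly the term that the many-body repulsion will later supply.

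For the many-body step, the induction-in-$N$ with a covering of the high-density region is not the route, and you correctly identify why it would be painful (non-additivity of the fractional seminorm over partitions, $N$-uniformity of constants) without resolving it. The argument that closes is partition-free: (i) a Hoffmann--Ostenhof-type convexity inequality (Minkowski in $L^{p}$ over the $\Xr_i,\Xl_i$ variables) bounds the fractional seminorm of $\rho_\psi^{1/p}$ by the sum of the slice seminorms, reducing $E(\psi)$ to $F(\rho_\psi^{1/p})$; (ii) a Lieb--Oxford-type bound for the homogeneous potential $|x_i-x_j|^{-sp}$ (proved via the Fefferman--de la Llave decomposition and the maximal function) bounds the repulsion from below by $\frac12\int\!\!\int\frac{\rho_\psi(x)\rho_\psi(y)}{|x-y|^{sp}}-C_{LO}\int\rho_\psi^{1+sp/d}$; (iii) applying the one-body inequality to $\rho_\psi^{1/p}$ together with Young's inequality with a small parameter $\eps$ yields a lower bound $C\eps^{sp/d}\int\rho_\psi^{1+sp/d}$, and since $sp<d$ one can choose $\eps$ so that $C\eps^{sp/d}-\eps C_{LO}>0$. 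No localization, no induction, and the constants are manifestly $N$-independent. Without steps (i)--(iii) or a worked-out substitute for them, your proposal does not constitute a proof.
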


The constant  $\mcal{C}_{d,s,p}$  in \Cref{thm4} is defined by, for $0 < s < 1$, 
\begin{equation}\label{harOpt}
\mcal{C}_{d,s,p} := 2\int_0^1 r^{sp-1}\md{1-r^\frac{d-sp}{p}}^p \Phi_{d,s,p}(r) \, dr, 
\end{equation}
where 
\begin{equation}
\Phi_{d,s,p} = 
\left\{\begin{array}{cl}
\dsp \md{\mathbb{S}^{d-2}} \int_{-1}^1  \frac{\bct{1-t^2}^\frac{d-3}{2} dt}{\bct{1-2rt+r^2}^\frac{d+sp}{2}} & \mbox{ for }  d\geq2, \\[6pt] \dsp \frac{1}{(1-r)^{1+ps}}+ \frac{1}{(1+r)^{1+ps}} & \mbox{ for } d=1,  
\end{array}\right. 
\end{equation}
and 
$$
\mcal{C}_{d,1,p} = \left(\frac{n-p}{p}\right)^p. 
$$ 

When $s=1$, $p=2$, the following many body inequality was derived by Lieb and Thirring \cite{LT75, LT76} in order to give a simpler proof of the stability of non-relativistic matter first given by Dyson and Lenard \cite{LD68}.
\begin{equation}\label{LTIn}
\langle \psi, \sum_{i=1}^N -\Delta_i\psi \rangle \geq C_{LT} \int_{\ro^d} \rho\psi ^\frac{d+2}{d},
\end{equation}
where $\psi \in H^1(\mR^{dN})$, anti-symmetric and normalized in $L^2(\mR^{dN})$. $\rho\psi$ is same as in \eqref{density}, and $C_{LT}>0$ is a constant  independent of $\psi$ and $N$. It is easy to that, \eqref{LTIn} is no longer true if $\psi$ is not anti-symmetric e.g. if $\psi(x_1,\dots,x_N)= u(x_1)\dots u(x_N)$, which is a typical state of boson. In \cite{LunPorSol}, Lundholm, Portmann and Solovej  noticed that Lieb-Thirring type inequalities still hold true for particles without any symmetry assumptions and therefore in particular for bosons, provided that the anti-symmetry assumption is replaced by a sufficiently strong repulsive interaction between particles. More precisely, they established \eqref{thm4-ineq} for $s=1$, $p=2$ in the absence of the inverse square potential $\frac{1}{|x_i|^2}$. Subsequently, Lundholm, Nam and Portmann \cite{LunNamPor} established an improved version of this inequality. In fact, they proved \eqref{thm4-ineq} for $s>0$ and $p=2$. Our approach of proving \Cref{thm4} is as follows. First using \Cref{thm2} and a (fractional) Hardy inequality due to Frank and Seiringer \cite{FrSei}, we derive \Cref{harCs} below. Then we follow the strategy of \cite{LunNamPor} to derive \Cref{thm4} from \Cref{harCs}.

\medskip 

To prove \Cref{thm4}, we will establish the following  Hardy-Lieb-Therring inequality.

\begin{pro}\label{harCs}
Let $d\geq1$,  $0<s \le 1$ and $p\geq2$ be such that $sp<d$. Then there exist $C>0$ such that for any $u \in C_c^\infty(\ro^d)$ we have 

\begin{equation}\label{harCs1}
F(u)^{1-\frac{sp}{d}} \\[6pt]
\times  \bct{\int_{\ro^d}\int_{\ro^d} \frac{|u(x)|^p|u(y)|^p dxdy}{|x-y|^{sp}}}^{\frac{sp}{d}}  \geq  C \int_{\ro^d} \md{u}^{\frac{p(d+sp)}{d}},
\end{equation}
where 
\begin{equation}\label{harCs1}
F(u): = \left\{ \begin{array}{cl} \dsp \bct{\int_{\ro^d}\int_{\ro^d} \frac{\md{u(x) -u(y)}^p dx dy}{\md{x-y}^{d+sp}}- \mcal{C}_{d,s,p} \int_{\ro^d} \frac{|u(x)|^pdx}{|x|^{sp}}} & \mbox{ for } 0 < s <1,  \\[6pt]
\dsp \int_{\mR^d} |\nabla u (x)|^p \, dx - \mcal{C}_{d,1,p} \int_{\ro^d} \frac{|u(x)|^pdx}{|x|^{sp}} & \mbox{ for } s = 1. 
\end{array}\right. 
\end{equation}
\end{pro}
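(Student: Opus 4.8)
The plan is to deduce \Cref{harCs} by applying \Cref{thm2} to the ground state substitution $v(x) := |x|^{\frac{d-sp}{p}} u(x)$ and then invoking the sharp (fractional) Hardy inequality with remainder of Frank and Seiringer \cite{FrSei}. Recall that $\phi(x) := |x|^{-\frac{d-sp}{p}}$ is the virtual ground state of the sharp Hardy inequality; writing $u = \phi v$, the Frank--Seiringer ground state representation gives, for $0<s<1$,
\[
\int_{\ro^d}\!\int_{\ro^d}\frac{|u(x)-u(y)|^p}{|x-y|^{d+sp}}\, dx\, dy \ -\ \mcal{C}_{d,s,p}\int_{\ro^d}\frac{|u(x)|^p}{|x|^{sp}}\, dx \ \ge\ c\int_{\ro^d}\!\int_{\ro^d}\frac{|v(x)-v(y)|^p\,\big(\phi(x)\phi(y)\big)^{p/2}}{|x-y|^{d+sp}}\, dx\, dy ,
\]
that is, $F(u)\ \ge\ c\,\|v\|_{\dot W^{s,p,\alpha_{1,1},\alpha_{1,2}}}^{p}$ with $\alpha_{1,1}=\alpha_{1,2}=-\frac{d-sp}{2p}$; the corresponding sharp local $L^p$ Hardy inequality with remainder is used when $s=1$, and this is where the hypothesis $p\ge 2$ enters.

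The second ingredient is \Cref{thm2}, which I would apply to $v$ with the parameters
\[
q=p,\quad \alpha=d-sp,\quad \gamma'=\frac{p(d+sp)}{d},\quad \tau'=-\frac{d-sp}{p},\quad \alpha_{2,1}=\alpha_{2,2}=-\frac{d-sp}{p},
\]
\[
\beta_1=\frac{d-sp}{p(d+sp)},\qquad \beta_2=\frac{s}{d+sp},
\]
and $\alpha_{1,1}=\alpha_{1,2}=-\frac{d-sp}{2p}$ as above. Elementary algebra (using $sp<d$ and $s>0$) gives $0<\alpha<d$, $0<\beta_1,\beta_2<+\infty$, $\beta_1 p+2\beta_2 q=1$, that the $\gamma$ determined by $(d-sp)\beta_1+(d+\alpha)\beta_2=d/\gamma$ equals $\gamma'=\frac{p(d+sp)}{d}>1$, that $\sigma=\beta_1 p\,\alpha_1+\beta_2 q\,\alpha_2=-\frac{d-sp}{p}=\tau'$ (so both identities in \eqref{betaEquations-m} hold and $\gamma\ge\gamma'$), and that $\beta_1\gamma'+\beta_2\gamma'=1$, so that \eqref{thm2-cd4-1} holds. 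Finally $\frac{1}{\gamma'}+\frac{\tau'}{d}=\frac{s^2 p}{d(d+sp)}>0$; since $\frac{d-sp}{p}>0$ and $u\in C_c^\infty(\ro^d)$, the function $v$ is bounded with compact support, hence $v\in L^1(\ro^d)$, and therefore \Cref{thm2} applies to $v$.

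With these choices one has $|x|^{\tau'}v=u$, and the weights $|x|^{\alpha_{2,1}q}$, $|y|^{\alpha_{2,2}q}$ exactly cancel the factors $|x|^{\frac{d-sp}{p}}$, $|y|^{\frac{d-sp}{p}}$ produced by $v$ in the Coulomb term (while $d-\alpha=sp$), so \Cref{thm2} reads
\[
\Big(\int_{\ro^d}|u|^{\gamma'}\, dx\Big)^{1/\gamma'}\ \le\ C\,\|v\|_{\dot W^{s,p,\alpha_{1,1},\alpha_{1,2}}}^{p\beta_1}\ \bct{\int_{\ro^d}\!\int_{\ro^d}\frac{|u(x)|^p|u(y)|^p}{|x-y|^{sp}}\, dx\, dy}^{\beta_2}.
\]
Bounding $\|v\|_{\dot W^{s,p,\alpha_{1,1},\alpha_{1,2}}}^{p\beta_1}\le \big(c^{-1}F(u)\big)^{\beta_1}$ by the Frank--Seiringer estimate above (note $F(u)\ge 0$ by the sharp Hardy inequality, and if $F(u)=0$ then $v$ is constant, forcing $u\equiv 0$, so there is nothing to prove), and then raising both sides to the power $\gamma'=\frac{p(d+sp)}{d}$, one uses $\beta_1\gamma'=1-\frac{sp}{d}$ and $\beta_2\gamma'=\frac{sp}{d}$ to arrive at the asserted inequality of \Cref{harCs}.

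The single genuinely delicate point is the first ingredient: identifying the remainder in the Frank--Seiringer ground state representation \cite{FrSei} with precisely the weighted Gagliardo seminorm $\|v\|_{\dot W^{s,p,\alpha_{1,1},\alpha_{1,2}}}^{p}$ (weight $(\phi(x)\phi(y))^{p/2}$), together with its local analogue in the case $s=1$; once this is granted, everything else is the routine parameter bookkeeping indicated above and the cancellation of the weights. A harmless technical remark is that $v$ is only $L^1$ with compact support and not smooth at the origin, but this is exactly the class of admissible test functions allowed in \Cref{thm2}.
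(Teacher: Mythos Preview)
Your proposal is correct and follows essentially the same route as the paper: the ground state substitution $v(x)=|x|^{(d-sp)/p}u(x)$, the Frank--Seiringer remainder estimate to bound $F(u)$ below by the weighted Gagliardo seminorm of $v$, and then \Cref{thm2} applied to $v$ with exactly the parameters you list (the paper computes the same verifications $\gamma=\gamma'$, $\sigma=\tau'$, $\beta_1\gamma'+\beta_2\gamma'=1$, and $\frac{1}{\gamma'}+\frac{\tau'}{d}=\frac{s^2p}{d(d+sp)}>0$). Your remarks on the $s=1$ case and on $v\in L^1$ with compact support also match the paper's handling of these points.
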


The proof of \Cref{harCs} contains two main ingredients. The first one is \Cref{thm2} and the second one is a (fractional) Hardy inequality due to Frank and Seiringer \cite{FrSei}.

\medskip 
The rest of this section containing two subsection is organized as follows. We prove \Cref{harCs} and \Cref{thm4}  in the first subsection and the second subsection, respectively.

\subsection{Proof of \Cref{harCs}}
Let $u\in C_c^{\infty}(\ro^d)$ and define $\varphi (x):=|x|^{(d-sp)/p}u(x)$. We first consider the case $0< s < 1$. 
We have, see \cite[Theorem 1.2]{FrSei},   
\begin{multline}\label{harCs-p1}
\iint_{\ro^d\times\ro^d} \frac{|u(x)-u(y)|^p}{|x-y|^{N+sp}} dx dy -\mcal{C}_{d,s,p} \int_{\ro^d} \frac{|u|^p}{|x|^{sp}} dx  \\[6pt]
\geq c_p \iint_{\ro^d\times \ro^d} \frac{\Big| \varphi (x) - \varphi(y) \Big|^p |x|^{-\frac{d-sp}{2}}|y|^{-\frac{d-sp}{2}}}{|x-y|^{d+sp}} dx dy.
\end{multline}
where  $c_p:= \min_{0<r<\frac{1}{2}}\bct{(1-r)^p-r^p+pr^{p-1} }$.

We apply Theorem \ref{thm2} to $\varphi$ with 
\begin{equation*}
p=q\geq 2, \quad \gamma' = (d+sp)\frac{p}{d}, \quad  \alpha=d-sp,  \quad \beta_1 = \frac{d-sp}{p(d+sp)}, \quad \beta_2 =  \frac{s}{d+sp}, 
\end{equation*} 
\begin{equation*}
\tau'= -\frac{d-sp}{p}, \quad \alpha_{1,1}= \alpha_{1,2} = -\frac{d-sp}{2p},\quad \mbox{ and } \quad \alpha_{2,1}=  \alpha_{2,2} = -\frac{d-sp}{p}.
\end{equation*}
We then have 
\begin{equation*}
\alpha_1= \alpha_{1,1}+\alpha_{1,2} = -\frac{d-sp}{p}, \quad  \alpha_2=\alpha_{2,1}+\alpha_{2,2}  = -\frac{2(d-sp)}{p}, 
\end{equation*}
\begin{equation*}
\sigma= \beta_1p\alpha_1+\beta_2q\alpha_2= -(d-sp)/p =\tau', 
\end{equation*} 
\begin{equation*}
\frac{1}{\gamma}= \frac{1}{d}[(d-sp)\beta_1+(d+\alpha) \beta_2]= \frac{d}{p(d+sp)}s = \frac{1}{\gamma'}.
\end{equation*} 
One can check that  \eqref{betaEquations-m} holds and $\frac{1}{\gamma'}+\frac{\tau'}{d} = \frac{s^2p}{d(d+sp)}>0$. 

We then obtain 
\begin{multline}\label{harCs-p2}
\bct{\int_{\ro^d}\int_{\ro^d} \frac{\md{ \varphi (x) - \varphi (y)}^p |x|^{-\frac{d-sp}{2}} |y|^{-\frac{d-sp}{2}} dx dy}{\md{x-y}^{d+sp}}}^{1-\frac{sp}{d}} \\[6pt]
\times \bct{\int_{\ro^d}\int_{\ro^d} \frac{| \varphi (x)|^p| \varphi (y)|^p |x|^{-(d-sp)} |y|^{-(d-sp)} dxdy}{|x-y|^{sp}}}^{\frac{sp}{d}} \geq  C \int_{\ro^d} \md{|x|^{-\frac{d-sp}{p}}\varphi}^{\frac{p(d+sp)}{d}} ,
\end{multline}
Putting $\varphi = |x|^\frac{d-sp}{p}u$ in  \eqref{harCs-p2} and then using \eqref{harCs-p1},  we derive \eqref{harCs1}. 

The proof in the case $0 < s < 1$ is complete.

The proof in the case $s=1$ follows similarly. In this case,  instead of \eqref{harCs-p1}, one has, see \cite[Remark 2.5]{FrSei} (see also \cite[Theorem 1]{IIO}), 
\begin{equation}\label{harCs-p1*}
\int_{\mR^d} |\nabla u(x)|^p \, dx - \mcal{C}_{d,1,p} \int_{\ro^d} \frac{|u(x)|^pdx}{|x|^{sp}} \ge  c_p \int_{\mR^d}  \frac{| \varphi (x)|^p |x|^{-(d-p)}}{|x|^{p}} dx.
\end{equation}
The rest of the proof is almost unchanged and is omitted. \qed

\subsection{Proof of \Cref{thm4}}

The proof of \Cref{thm4} is based on \Cref{harCs} and the following three lemmas. The first one is

\begin{lem} \label{hoIn}
Let $d\ge 1$, $N \ge 1$, $0<s<1$, $1 \le p < + \infty$, $\alpha_1,\alpha_2\in \ro$ and  $\psi$ be a measurable function defined  in $\ro^{dN}$. We have  
\begin{multline}\label{hoIneq}
\int_{\ro^d}\int_{\ro^d} \frac{\md{\bct{\rho\psi}^\frac{1}{p}(x) - \bct{\rho\psi}^\frac{1}{p}(y) }^p |x|^{\alpha_1p}|y|^{\alpha_2p}}{\md{x-y}^{d+sp}}dx dy \\[6pt]
\leq \sum_{i=1}^N \int_{\ro^{d(N-1)}}\int_{\ro^d}\int_{\ro^d} \frac{\md{\psi(\Xr_i,x_i,\Xl_i) - \psi(\Xr_i,y_i,\Xl_i) }^p|x_i|^{\alpha_1p}|y_i|^{\alpha_2p}} {\md{x_i-y_i}^{d+sp}}dx_idy_id\Xr_id\Xl_i. 
\end{multline}
Here $\rho\psi$ is defined by \eqref{density}, and $X_i^R$ and $X_i^L$ are given by \eqref{thm4-notation}. 
\end{lem}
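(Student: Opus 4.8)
The plan is to reduce the weighted double-integral estimate to the single term $i=1$ after symmetrizing, and then to exploit the elementary pointwise inequality expressing that the $p$-th power of a difference of $\ell^p$-norms of vectors is controlled by the $\ell^1$-sum of the $p$-th powers of the coordinatewise differences. Concretely, write $\rho_\psi(x)=\sum_{i=1}^N F_i(x)$ with $F_i(x)=\int_{(\mR^d)^{N-1}}|\psi(X_i^R,x,X_i^L)|^p\,dX_i^R\,dX_i^L$. The key elementary fact I would isolate first is the following: for non-negative measurable functions $a_i(z)$ on a measure space $(Z,\mu)$ and for $p\ge 1$,
\begin{equation*}
\left| \Big(\textstyle\sum_i \int_Z a_i\,d\mu\Big)^{1/p} - \Big(\textstyle\sum_i \int_Z b_i\,d\mu\Big)^{1/p} \right|^p \le \sum_i \int_Z \big| a_i^{1/p} - b_i^{1/p}\big|^p\,d\mu .
\end{equation*}
This is the combination of two standard inequalities: (i) $\big|(\sum_j u_j)^{1/p}-(\sum_j v_j)^{1/p}\big|^p \le \sum_j |u_j^{1/p}-v_j^{1/p}|^p$ for non-negative reals $u_j,v_j$ (the map $(w_j)\mapsto (\sum w_j^p)^{1/p}$ is $1$-Lipschitz from $\ell^p$ to $\ell^p$, i.e. it is a norm hence subadditive, applied to $w_j=u_j^{1/p}$, $w_j=v_j^{1/p}$ and the triangle inequality), and (ii) Minkowski's integral inequality, $\big\|\,\|\cdot\|_{\ell^p}\,\big\|_{L^p(\mu)} = \big\|\,\|\cdot\|_{L^p(\mu)}\,\big\|_{\ell^p}$, used to pull the $\ell^p$-norm over $i$ outside the integral over $Z$.

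With that inequality in hand, the proof is essentially a substitution. First I would symmetrize: since the left-hand side integrand involves $|x|^{\alpha_1 p}|y|^{\alpha_2 p}$, it is convenient to keep the roles of $x$ and $y$ as written and apply the elementary fact with $Z=(\mR^d)^{N-1}$, $\mu$ Lebesgue measure on $Z$, $a_i(\cdot)=|\psi(X_i^R,x,X_i^L)|^p$ (so $a_i^{1/p}=|\psi(X_i^R,x,X_i^L)|$) and $b_i(\cdot)=|\psi(X_i^R,y,X_i^L)|^p$, for each \emph{fixed} pair $(x,y)\in\mR^d\times\mR^d$. This gives, pointwise in $(x,y)$,
\begin{equation*}
\big| (\rho_\psi)^{1/p}(x) - (\rho_\psi)^{1/p}(y)\big|^p \le \sum_{i=1}^N \int_{(\mR^d)^{N-1}} \big| \psi(X_i^R,x,X_i^L) - \psi(X_i^R,y,X_i^L)\big|^p\,dX_i^R\,dX_i^L .
\end{equation*}
Then I multiply by the kernel $|x|^{\alpha_1 p}|y|^{\alpha_2 p}/|x-y|^{d+sp}$, integrate in $(x,y)$ over $\mR^d\times\mR^d$, and apply Tonelli's theorem to interchange the $(x,y)$-integration with the sum over $i$ and the $(X_i^R,X_i^L)$-integration (everything is non-negative, so this is unconditional). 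Relabelling the integration variables $x\mapsto x_i$, $y\mapsto y_i$ inside the $i$-th term yields exactly the claimed right-hand side $\sum_i \int_{(\mR^d)^{N-1}}\int_{\mR^d}\int_{\mR^d}\frac{|\psi(X_i^R,x_i,X_i^L)-\psi(X_i^R,y_i,X_i^L)|^p|x_i|^{\alpha_1 p}|y_i|^{\alpha_2 p}}{|x_i-y_i|^{d+sp}}\,dx_i\,dy_i\,dX_i^R\,dX_i^L$.

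The only genuinely non-routine point is the elementary inequality (i): $\big|(\sum_j u_j)^{1/p}-(\sum_j v_j)^{1/p}\big|\le \big(\sum_j |u_j^{1/p}-v_j^{1/p}|^p\big)^{1/p}$ for non-negative reals. I would prove it by writing $u_j^{1/p}=\alpha_j$, $v_j^{1/p}=\gamma_j\ge 0$, noting $(\sum_j \alpha_j^p)^{1/p}=\|\alpha\|_{\ell^p}$, and using the reverse-triangle inequality $\big|\|\alpha\|_{\ell^p}-\|\gamma\|_{\ell^p}\big|\le \|\alpha-\gamma\|_{\ell^p}$. Everything else — Minkowski's integral inequality and Tonelli — is standard, so I expect no real obstacle; the main care is simply bookkeeping the variables $X_i^R,X_i^L$ and making sure the fixed-$(x,y)$ application of the elementary fact is legitimate (it is, since for fixed $(x,y)$ the functions $a_i,b_i$ are genuine non-negative functions of $(X_i^R,X_i^L)$). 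A completely analogous but simpler argument, with the double integral replaced by $\int_{\mR^d}|\cdot|^p|x|^{(\alpha_1+\alpha_2)p}\,dx$, would also handle the degenerate cases $s=0$ and $s=1$ (for $s=1$ one differentiates: $\nabla (\rho_\psi^{1/p})$ is controlled by the same convexity/Minkowski argument applied to the gradient), should one wish to state the lemma in that generality.
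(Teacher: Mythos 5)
Your proof is correct and takes essentially the same route as the paper's: the pointwise bound $\big|(\rho\psi)^{1/p}(x)-(\rho\psi)^{1/p}(y)\big|^p\le\sum_{i=1}^N\int_{(\mR^d)^{N-1}}|\psi(\Xr_i,x,\Xl_i)-\psi(\Xr_i,y,\Xl_i)|^p\,d\Xr_i\,d\Xl_i$ is exactly the paper's application of Minkowski's (reverse triangle) inequality in $L^p$ of the product space $\{1,\dots,N\}\times(\mR^d)^{N-1}$, after which one multiplies by the weighted kernel and integrates via Tonelli. The only difference is that you spell out the elementary inequality in two steps ($\ell^p$ reverse triangle plus Minkowski's integral inequality), which the paper compresses into a single citation of Minkowski.
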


For $s=1$, we use the following lemma.
\begin{lem} \label{hoIn-s1}
Let $d\ge 1$, $N \ge 1$, $1 \le p  < + \infty$, $\alpha_1\in \ro$ and  $\psi$ be a smooth function defined  in $\ro^{dN}$. We have  
\begin{equation}\label{hoIneq-s1}
\int_{\ro^d}\md{\nabla\bct{\rho\psi}^\frac{1}{p}(x)}^p |x|^{\alpha_1p}dx\leq\sum_{i=1}^N \int_{\ro^{dN}} \md{\nabla_{x_i}\psi(X)}^p|x_i|^{\alpha_1p}dX. 
\end{equation}
Here $\rho\psi$ is defined by \eqref{density}. 
\end{lem}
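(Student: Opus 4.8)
The plan is to reduce \eqref{hoIneq-s1} to a pointwise differential inequality and then integrate against the weight. Write $\rho\psi = \sum_{i=1}^N \rho_i$ with $\rho_i(x) := \int_{\ro^{d(N-1)}}\md{\psi(\Xr_i,x,\Xl_i)}^p\,d\Xr_i\,d\Xl_i$, and set $G_i(x) := \int_{\ro^{d(N-1)}}\md{\nabla_{x_i}\psi(\Xr_i,x,\Xl_i)}^p\,d\Xr_i\,d\Xl_i$, so that the right-hand side of \eqref{hoIneq-s1} equals $\sum_i \int_{\ro^d}|x|^{\alpha_1 p}G_i(x)\,dx$ (using $x_i = x$ inside $G_i$). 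If this quantity is infinite there is nothing to prove, so assume it is finite. Differentiating under the integral sign (legitimate since $\psi$ is smooth; in the application $\psi\in C_c^\infty$ and there is no issue at infinity) and using $\md{\nabla_x \md{\psi}^p} \le p\,\md{\psi}^{p-1}\md{\nabla_{x_i}\psi}$ together with H\"older's inequality in the variables $(\Xr_i,\Xl_i)$ with exponents $p/(p-1)$ and $p$, I get $\md{\nabla\rho_i(x)} \le p\,\rho_i(x)^{(p-1)/p}\,G_i(x)^{1/p}$, hence $\md{\nabla\rho\psi(x)} \le p\sum_{i=1}^N \rho_i(x)^{(p-1)/p}G_i(x)^{1/p}$.

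Next I would use the chain rule $\nabla(\rho\psi)^{1/p} = \tfrac1p(\rho\psi)^{1/p-1}\nabla\rho\psi$ to obtain $\md{\nabla(\rho\psi)^{1/p}(x)} \le (\rho\psi(x))^{(1-p)/p}\sum_i \rho_i(x)^{(p-1)/p}G_i(x)^{1/p}$, and then apply the discrete H\"older inequality (again with exponents $p/(p-1)$ and $p$) to the sum, which gives $\sum_i \rho_i^{(p-1)/p}G_i^{1/p} \le \bigl(\sum_i\rho_i\bigr)^{(p-1)/p}\bigl(\sum_i G_i\bigr)^{1/p} = (\rho\psi)^{(p-1)/p}\bigl(\sum_i G_i\bigr)^{1/p}$. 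Multiplying the two estimates, the factors $(\rho\psi)^{(1-p)/p}$ and $(\rho\psi)^{(p-1)/p}$ cancel, leaving the pointwise bound $\md{\nabla(\rho\psi)^{1/p}(x)}^p \le \sum_{i=1}^N G_i(x)$ a.e.; multiplying by $|x|^{\alpha_1 p}$ and integrating over $x\in\ro^d$ yields \eqref{hoIneq-s1}. For $p=1$ the exponents $(p-1)/p$ vanish and both H\"older steps are trivial, so the argument covers the whole range $1\le p<+\infty$.

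The only delicate point is that $t\mapsto t^{1/p}$ fails to be Lipschitz at $0$, so the chain rule used above needs justification on the set $\{\rho\psi=0\}$. I would deal with this in the standard way by replacing $\rho\psi$ by $\rho\psi+\eps$: running the identical computation gives $\md{\nabla(\rho\psi+\eps)^{1/p}(x)}^p \le (\rho\psi(x)+\eps)^{1-p}\bigl(\sum_i\rho_i(x)\bigr)^{p-1}\sum_i G_i(x) \le \sum_i G_i(x)$ uniformly in $\eps>0$, since $\sum_i\rho_i = \rho\psi \le \rho\psi+\eps$ and $p-1\ge 0$. Thus $(\rho\psi+\eps)^{1/p}$ is bounded in the weighted $W^{1,p}$ space; letting $\eps\to0$ and using $(\rho\psi+\eps)^{1/p}\to(\rho\psi)^{1/p}$ pointwise together with weak compactness and lower semicontinuity of the weighted $L^p$ norm of the gradient identifies the weak limit of the gradients with $\nabla(\rho\psi)^{1/p}$ and gives the bound. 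This regularization-and-limit scheme is the same one underlying \Cref{hoIn}, with difference quotients replaced by gradients, and no ingredient beyond the elementary double application of H\"older (continuous in $(\Xr_i,\Xl_i)$, discrete in $i$) is required.
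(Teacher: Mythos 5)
Your proof is correct and follows essentially the same route as the paper's: the pointwise bound $\md{\nabla\rho\psi}^p \le p^p\,(\rho\psi)^{p-1}\sum_i G_i$ obtained by a double application of H\"older (continuous in $(\Xr_i,\Xl_i)$, discrete in $i$), combined with the chain rule for $t\mapsto t^{1/p}$ so that the powers of $\rho\psi$ and of $p$ cancel. Your additional regularization $\rho\psi+\eps$ to justify the chain rule on the set $\{\rho\psi=0\}$ is a welcome refinement that the paper's one-line proof leaves implicit.
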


Inequality \eqref{hoIneq-s1} was first discovered by Hoffman--Ostenhof  in \cite{HofOst},  $p=2$  and $\alpha_1=\alpha_2=0$. Later on, in \cite[Lemma 8.4]{LS10}, \eqref{hoIneq},\eqref{hoIneq-s1} was proved  for $p=2$ and $\alpha_1=\alpha_2=0$.  The proofs in the general cases stated here follows in the same spirit. For the convenience of the proof, we give the proof  in \Cref{sect-holn}.

\medskip 
The third lemma used in the proof of \Cref{thm4} is

\begin{lem} \label{liOxIn}  Let $d \ge 1$ and $1 \le p < + \infty$. 
For every $0<\gamma<d$ and for every $\psi\in L^p\bct{\ro^{dN}}$ with $\dsp \int |\psi (X)|^p \, dX= 1$, we have
\begin{align}\label{liOxIneq}
\int_{\mR^{dN}} \sum_{1\leq i<j \leq N} \frac{|\psi (X)|^p}{\md{x_i-x_j}^\gamma} \, dX \geq \frac{1}{2}\int_{\ro^d}\int_{\ro^d} \frac{\rho\psi(x)\rho\psi(y)}{|x-y|^\gamma} dx dy -C_{LO} \int_{\ro^d} \md{\rho\psi (x)}^{1+\frac{\gamma}{d}}  \, dx 
\end{align}
for a constant $C_{LO}>0$ depending only on $d$, $\gamma$ and $p$.
\end{lem}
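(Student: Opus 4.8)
The plan is to prove this as a purely combinatorial/measure-theoretic fact, independent of the interpolation machinery above; it is essentially the Lieb--Oxford type bound adapted to the $L^p$ normalization, and I would follow the strategy of Lundholm, Nam and Portmann \cite{LunNamPor}. First I would rewrite the left-hand side in terms of the ($p$-)density. Introduce the two-particle density
$$
\rho_\psi^{(2)}(x,y) := \sum_{1 \le i < j \le N} \int_{(\ro^d)^{N-2}} |\psi(\dots, x_i = x, \dots, x_j = y, \dots)|^p \, d(\text{other variables}),
$$
so that the left-hand side equals $\int_{\ro^d}\int_{\ro^d} \rho_\psi^{(2)}(x,y) |x-y|^{-\gamma} \, dx\, dy$. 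The goal then is the pointwise-in-spirit inequality comparing $\rho_\psi^{(2)}$ with $\tfrac12 \rho_\psi(x)\rho_\psi(y)$ after subtracting a ``self-energy'' correction bounded by $C_{LO}\int |\rho_\psi|^{1+\gamma/d}$.

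The key steps, in order, are: (1) For each fixed configuration, use the elementary fact that for a finite set of points $\{x_1,\dots,x_N\}$ and any nonnegative measure, $\sum_{i<j} |x_i - x_j|^{-\gamma}$ can be bounded below by a ``smeared'' double integral minus a local correction — concretely, pick a length scale $\ell(x)$ (comparable to the inverse of a local density) and compare $\sum_{i<j}|x_i-x_j|^{-\gamma}$ with an integral of $|x-y|^{-\gamma}$ against indicator-type weights centered at the $x_i$. (2) Optimize the choice of the local scale: balancing the gained main term against the correction produces the exponent $1 + \gamma/d$ by dimensional analysis (the correction at scale $\ell$ around a point of mass $\rho$ costs $\sim \rho \cdot \rho\ell^d \cdot \ell^{-\gamma}$, minimized in $\ell$ giving $\sim \rho^{1+\gamma/d}$). (3) Integrate the resulting configuration-wise inequality against $|\psi(X)|^p \, dX$, using $\int |\psi|^p = 1$, and recognize $\int |\psi(X)|^p \mathbf{1}(\cdot) \, dX$ as producing $\rho_\psi$ via its definition \eqref{density}, and the quadratic smeared term as bounded below by $\tfrac12 \int\int \rho_\psi(x)\rho_\psi(y)|x-y|^{-\gamma}$ up to the admissible error. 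A clean way to run steps (1)--(2) is the Fefferman--de la Llave / Hardy-space decomposition: write $|x-y|^{-\gamma} = c_{d,\gamma}\int_0^\infty \int_{\ro^d} \mathbf{1}_{B_r(z)}(x)\mathbf{1}_{B_r(z)}(y) \, r^{-d-\gamma-1} \, dz \, dr$, so that
$$
\sum_{i<j} |x_i - x_j|^{-\gamma} = c_{d,\gamma} \int_0^\infty \int_{\ro^d} \binom{n_{r,z}}{2} \, r^{-d-\gamma-1} \, dz \, dr,
$$
where $n_{r,z} = \#\{i : x_i \in B_r(z)\}$, and then use $\binom{n}{2} \ge \tfrac12 n^2 - \tfrac12 n \cdot \mathbf{1}_{n \ge 1} \ge \tfrac12 n^2 - \tfrac12 n$ together with the truncation of the $r$-integral at a density-dependent threshold.

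The main obstacle is step (2), the bookkeeping of the local-scale correction: one must show that after integrating against $|\psi|^p \, dX$ the ``$-\tfrac12 n$'' type error terms, when summed over pairs and over the $r$-integral cut at the right scale, are controlled by $C_{LO}\int |\rho_\psi|^{1+\gamma/d}$ with a constant depending only on $d, \gamma, p$ and crucially \emph{not} on $N$. The $N$-independence is the whole point (it is what makes the downstream many-body Hardy--Lieb--Thirring inequality nontrivial), and it follows precisely because the Fefferman--de la Llave representation turns the sum over pairs into a single integral over space whose integrand is governed by the local count $n_{r,z}$, which integrates back to $\rho_\psi$; there is no residual combinatorial factor growing with $N$. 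I would also need to check the elementary inequality $\binom{n}{2} \ge \tfrac12 n^2 - \tfrac12 n$ suffices after the $r$-truncation, and handle the case distinction $n_{r,z}=0$ versus $n_{r,z}\ge 1$ carefully so the subtracted term is genuinely linear in the density.
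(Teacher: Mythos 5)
Your proposal is correct and follows essentially the same route as the paper: the Fefferman--de la Llave representation, the identity $\sum_{i<j}\chi_R(x_i-z)\chi_R(x_j-z)=\binom{n_{R,z}}{2}=\tfrac12 n_{R,z}^2-\tfrac12 n_{R,z}$ combined with Cauchy--Schwarz to recover $\tfrac12 f_R(z)^2-\tfrac12 f_R(z)$, and a truncation of the $R$-integral at a density-dependent scale yielding the exponent $1+\gamma/d$. The only ingredient you leave implicit is that the paper makes the "local scale" rigorous via the Hardy--Littlewood maximal function $\rho^*$ (choosing $R^*=(|B_1|\rho^*(z))^{-1/d}$ and then invoking the maximal function theorem to return to $\int(\rho\psi)^{1+\gamma/d}$), which is exactly the bookkeeping you flag as the remaining obstacle.
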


Inequality \eqref{liOxIneq} was first studied in \cite{Li'79,LiOx} for the case $\gamma=1$, $d=3$ and $p=2$ and known under the name 
the Lieb-Oxford inequality for homogeneous potential. Subsequently, it was derived in \cite[Lemma 5.3]{LiSolYng} for $\gamma=1$, $d=2$, $p=2$ and in \cite[Lemma 16]{LunNamPor} for the case $0<\gamma<d$ and $p=2$. Interestingly, the proof of \Cref{liOxIn} in the assumption stated there  does not differ much from that of \cite{LiSolYng} or \cite{LunNamPor}  in the case $p=2$,  and is given in \Cref{sect-liOxIn} for the completeness.

\medskip 
We are ready to give 

\medskip 
\noindent {\bf Proof of Theorem \ref{thm4}.} We only consider the case $0 < s < 1$, the proof in general case follows similarly and is omitted.  We have
\begin{equation}\label{thm4-p0}
\sum_{i=1}^N \int_{\ro^{dN}} \frac{\md{\psi(X)}^p \, dX}{|x_i|^{sp}} = \int_{\ro^d} \frac{\rho\psi(x)}{|x|^{sp}} dx.
\end{equation}
Applying \Cref{hoIn} and using \eqref{thm4-p0}, we obtain 
\begin{multline}\label{thm4-p1}
\sum_{i=1}^N  \int_{\ro^{d(N-1)}} \left[\int_{\ro^d}\int_{\ro^d} \frac{\md{\psi(\Xr_i,x_i,\Xl_i) -  \psi(\Xr_i,y_i,\Xl_i)}^p dx_idy_i}{\md{x_i-y_i}^{d+sp}} 
\right. \\[6pt] \left.- \mcal{C}_{d,s,p}\int_{\ro^d} \frac{\md{\psi(\Xr_i,x_i,\Xl_i)}^pdx_i}{\md{x_i}^{sp}} \right]  d\Xr_id\Xl_i  \\[6pt]
\geq \int_{\ro^d}\int_{\ro^d} \frac{\md{\bct{\rho\psi}^\frac{1}{p}(x) - \bct{\rho\psi}^\frac{1}{p}(y)}^p dx dy}{\md{x-y}^{d+sp}}  - \mcal{C}_{d,s,p}\int_{\ro^d} \frac{\rho\psi(x) \, dx}{|x|^{sp}}. 
\end{multline}
By \eqref{liOxIneq},  we obtain,  for $0<\eps<1$,
\begin{equation}\label{thm4-p2}
\eps \sum_{1\leq i<j\leq N} \int_{(\ro^d)^N} \frac{|\psi(X)|^p dX}{|x_i-x_j|^{sp}} \geq  \frac{\eps}{2} \int_{\ro^d}\int_{\ro^d} \frac{\rho\psi(x)\rho\psi(y) dx dy}{|x-y|^{sp}} - C_{LO} \eps \int_{\ro^d} |\rho\psi (x)|^{1+\frac{sp}{d}} \, dx. 
\end{equation}
Combining \eqref{thm4-p1} and \eqref{thm4-p2} yields 
\begin{multline}\label{thm4-p12}
\sum_{i=1}^N \int_{(\ro^d)^{N-1}} \left[\int_{\ro^d}\int_{\ro^d} \frac{\md{\psi(\Xr_i,x_i,\Xl_i) -  \psi(\Xr_i,y_i,\Xl_i)}^p dx_idy_i}{\md{x_i-y_i}^{d+sp}} 
\right. \\[6pt] \left.- \mcal{C}_{d,s,p}\int_{\ro^d} \frac{\md{\psi(\Xr_i,x_i,\Xl_i)}^pdx_i}{\md{x_i}^{sp}} \right]  d\Xr_id\Xl_i  + \eps \sum_{1\leq i<j\leq N} \int_{(\ro^d)^N} \frac{|\psi(X)|^p dX}{|x_i-x_j|^{sp}} \\[6pt]
\geq \int_{\ro^d}\int_{\ro^d} \frac{\md{\bct{\rho\psi}^\frac{1}{p}(x) - \bct{\rho\psi}^\frac{1}{p}(y)}^p dx dy}{\md{x-y}^{d+sp}}  - \mcal{C}_{d,s,p}\int_{\ro^d} \frac{\rho\psi(x)dx}{|x|^{sp}} \\[6pt]+ \frac{\eps}{2} \int_{\ro^d}\int_{\ro^d} \frac{\rho\psi(x)\rho\psi(y) dx dy}{|x-y|^{sp}} - C_{LO} \eps \int_{\ro^d} |\rho\psi(x)|^{1+\frac{sp}{d}}. 
\end{multline}

Since, by Young's inequality after noting that $sp/d < 1$,  
\begin{multline*}
 \left[\int_{\ro^d}\int_{\ro^d} \frac{\md{\bct{\rho\psi}^\frac{1}{p}(x) - \bct{\rho\psi}^\frac{1}{p}(y)}^p dx dy}{\md{x-y}^{d+sp}}  -\mcal{C}_{d,s,p} \int_{\ro^d} \frac{\rho\psi(x)dx}{|x|^{sp}}\right] +\epsilon  \int_{\ro^d}\int_{\ro^d} \frac{\rho\psi(x)\rho\psi(y) dx dy }{|x-y|^{sp}} \\[6pt] \geq C \epsilon^\frac{sp}{d}  \bct{\int_{\ro^d}\int_{\ro^d} \frac{\md{\bct{\rho\psi}^\frac{1}{p}(x) - \bct{\rho\psi}^\frac{1}{p}(y)}^p dx dy}{\md{x-y}^{d+sp}}  - \mcal{C}_{d,s,p}\int_{\ro^d} \frac{\rho\psi(x)dx}{|x|^{sp}}}^{1-\frac{sp}{d}}   \\[6pt]\times \bct{\int_{\ro^d}\int_{\ro^d} \frac{\rho\psi(x)\rho\psi(y) dx dy }{|x-y|^{sp}}}^\frac{sp}{d}, 
\end{multline*}
applying  \Cref{harCs} to $(\rho\psi)^{1/p}$,  we obtain 
 \begin{multline}\label{thm4-p3} \left[\int_{\ro^d}\int_{\ro^d} \frac{\md{\bct{\rho\psi}^\frac{1}{p}(x) - \bct{\rho\psi}^\frac{1}{p}(y)}^p dx dy}{\md{x-y}^{d+sp}}  -\mcal{C}_{d,s,p} \int_{\ro^d} \frac{\rho\psi(x)dx}{|x|^{sp}}\right] \\[6pt] 
 +\epsilon \int_{\ro^d}\int_{\ro^d} \frac{\rho\psi(x)\rho\psi(y) dx dy }{|x-y|^{sp}} 
\geq C\epsilon^\frac{sp}{d} \int_{\ro^d} (\rho\psi)^{1+sp/d}.
\end{multline}

By choosing $\eps$ sufficiently so that  $C\epsilon^\frac{sp}{d}-\epsilon C_{LO}>0$ (this can be done since $sp < d$), we derive from \eqref{thm4-p12} and \eqref{thm4-p3} the conclusion. The proof is complete. 
 \qed 

\appendix

\section{Proof of Lemma \ref{hoIn} and \ref{hoIn-s1}} \label{sect-holn}
First we consider $s=1$,  i.e.,  we establish \Cref{hoIn-s1}. In this case \eqref{hoIneq-s1} is a direct consequence of the following estimate which is proved by using  H\"older's inequality with exponents $p$, $\frac{p}{p-1}$. 
$$\md{\nabla_x \rho\psi(x)}^p \leq \bct{\rho\psi (x)}^{p-1}\sum _{i=1}^N\int_{\mR^{d(N-1)}} \md{\nabla _x \psi (X_i^R, x, X_i^L)}^p dX_i^RdX_i^L.$$

We next consider $0<s<1$,  i.e.,  we prove \Cref{hoIn}. We have, by \eqref{density}, 
\begin{multline}\label{holn-p1}
\md{\bct{\rho\psi}^\frac{1}{p}(x) - \bct{\rho\psi}^\frac{1}{p}(y) }^p = \md{ \bct{\sum_{i=1}^N \int_{\ro^{d(N-1)}} \md{\psi(\Xr_i,x,\Xl_i)}^pd\Xr_id\Xl_i }^\frac{1}{p}  \right.\\[6pt] \left. - \bct{\sum_{i=1}^N \int_{\ro^{d(N-1)}} \md{\psi(\Xr_i,y,\Xl_i)}^pd\Xr_id\Xl_i}^\frac{1}{p} }^p. 
\end{multline}
By Minkowski's inequality,  
\begin{equation}\label{holn-p2}
\mbox{the RHS of \eqref{holn-p1}} \le   \sum_{i=1}^N \int_{\ro^{d(N-1)}} \md{\psi(\Xr_i,x,\Xl_i)- \psi(\Xr_i,y,\Xl_i)}^p d\Xr_id\Xl_i.
\end{equation}
Combining \eqref{holn-p1} and \eqref{holn-p2} yields 
\begin{align*}
&\int_{\ro^d}\int_{\ro^d} \frac{\md{\bct{\rho\psi}^\frac{1}{p}(x) - \bct{\rho\psi}^\frac{1}{p}(y) }^p |x|^{\alpha_1p}|y|^{\alpha_2p}}{\md{x-y}^{d+sp}}dx dy \\ &\leq \sum_{i=1}^N \int_{\ro^{d(N-1)}} \int_{\ro^d}\int_{\ro^d} \frac{\md{\psi(\Xr_i,x,\Xl_i)- \psi(\Xr_i,y,\Xl_i)}^p |x|^{\alpha_1p}|y|^{\alpha_2p}}{\md{x-y}^{d+sp}} dxdyd\Xr_id\Xl_i,
\end{align*}
which is the conclusion. \qed

\section{Proof of Lemma \ref{liOxIn}} \label{sect-liOxIn}

Let $\chi_R$ denote the characteristic function of the ball $\overline{B(0,R)}$. Then the following identity,  due to Fefferman-de la Llave \cite{FefDll}  (see also, \cite[Theorem 9.8]{LiebLoss}) holds
\begin{equation}\label{liOxln-p1}
\frac{1}{|x-y|^\gamma} = c_{d,\gamma} \int_0^\infty \int_{\ro^d} \chi_R(x-z)\chi_R(y-z) \frac{dzdR}{R^{d+\gamma+1}}.
\end{equation}
for some positive constant $c_{\gamma, d}$ depending only on $\gamma$ and $d$. Denote 
$$
f_R(z)= \int_{\ro^d} \rho\psi(x) \chi_R(x-z) dx  \mbox{ for } z \in \ro^d.
$$
It follows from \eqref{liOxln-p1} that 
\begin{align}\label{liOxln-p2}
\int_{\ro^d}\int_{\ro^d} \frac{\rho\psi(x)\rho\psi(y)}{|x-y|^\gamma} dx dy = c_{d,\gamma} \int_0^\infty\int_{\ro^d} f^2_R(z) \frac{dzdR}{R^{d+\gamma+1}},
\end{align}

Using \eqref{liOxln-p1}, we also have
\begin{multline}\label{liOxln-p3}
\int_{\ro^{dN}} |\psi (X)|^p \sum_{1\leq i<j\leq N} \frac{1}{|x_i-x_j|^\gamma} dX \\[6pt]
= c_{d,\gamma} \int_0^\infty \int_{\ro^d} \int_{\ro^{dN}} |\psi(X)|^p\sum_{1\leq i<j\leq N} \chi_R(x_i-z)\chi_R(x_j-z) \frac{dXdzdR}{R^{d+\gamma+1}} . 
\end{multline}

We have 
\begin{multline}\label{liOxln-p4}
\sum_{1\leq i<j\leq N} \chi_R(x_i-z)\chi_R(x_j-z) = \frac{1}{2} \bct{\sum_{i=1}^N \chi_R(x_i-z)}^2 -\frac{1}{2} \sum_{i=1}^N \chi_R(x_i-z) \\[6pt]= \frac{1}{2} \bct{\sum_{i=1}^N \chi_R(x_i-z)-f_R(z)}^2 + f_R(z)\sum_{i=1}^N \chi_R(x_i-z)-\frac{1}{2} f_R^2(z) -\frac{1}{2}\sum_{i=1}^N \chi_R(x_i-z).
\end{multline}
and 
\begin{align}\label{liOxln-p5}
\int_{\ro^{dN}} |\psi(X)|^p \sum_{i=1}^N \chi_R(x_i-z) dX = & \sum_{i=1}^N \int_{\ro^d}\int_{\ro^{d(N-1)}} \chi_R(x_i-z) |\psi(\Xr_i,x_i,\Xl_i)|^p \, dx_i \, d\Xr_i \, d\Xl_i \\[6pt]= & \int_{\ro^d} \chi_R(x-z)\rho\psi(x) dx = f_R(z) \nonumber
\end{align}
From  \eqref{liOxln-p4}  we obtain
\begin{multline}
\int_{\ro^{dN}} |\psi(X)|^p \sum_{1\leq i<j\leq N} \chi_R(x_i-z)\chi_R(x_j-z) dX \\[6pt]
\ge  \int_{\ro^{dN}} |\psi(X)|^p  \left( f_R(z)\sum_{i=1}^N \chi_R(x_i-z)-\frac{1}{2} f_R^2(z) -\frac{1}{2}\sum_{i=1}^N \chi_R(x_i-z)\right). 
\end{multline}
We then derive from \eqref{liOxln-p5} and the fact $\int_{\ro^{dN}}|\psi (X)|^p \, d X=1$ that 
\begin{equation}\label{liOxln-p6}
\int_{\ro^{dN}} |\psi(X)|^p \sum_{1\leq i<j\leq N} \chi_R(x_i-z)\chi_R(x_j-z) dX \ge \frac{1}{2}f_R^2(z)- \frac{1}{2}f_R(z).
\end{equation}

Plugging  \eqref{liOxln-p6} into \eqref{liOxln-p3}, using  
\eqref{liOxln-p2} and  the fact $f_R(z) \ge \min\{f_R(z), f_R^2(z)\} $, we obtain
\begin{multline}\label{liOxln-p7}
\int_{\ro^{dN}} |\psi (X)|^p \sum_{1\leq i<j\leq N} \frac{1}{|x_i-x_j|^\gamma} dX \\[6pt] \geq  \frac{1}{2} \int_{\ro^d}\int_{\ro^d} \frac{\rho\psi(x)\rho\psi(y)}{|x-y|^\gamma} dx dy -\frac{c_{d,\gamma}}{2}\int_0^\infty \int_{\ro^d} \min\{f_R(z), f_R^2(z)\} \frac{dzdR}{R^{d+\gamma+1}}. 
\end{multline}

Let $\rho^*$ denote  the Hardy-Littlewood maximal function of $\rho\psi$, i.e., 
\begin{equation}\label{liOxln-rho} 
\rho^*(z) := \sup_{R>0} \fint_{B(z,R)} \rho\psi(x) dx = \md{B_1}^{-1} \sup_{R>0} \frac{f_R(z)}{R^d}.
 \end{equation}

For $R^*>0$ and $z \in \mR^d$, we have
\begin{align*}
\int_0^\infty \min\{f_R^2(z),f_R(z)\} \frac{dR}{R^{d+\gamma+1}} &\leq \int_{0}^{R^*} f_R^2(z) \frac{dR}{R^{d+\gamma+1}} +\int_{R^*}^\infty f_{R}(z) \frac{dR}{R^{d+\gamma+1}} \\ & \mathop{\leq}^{\eqref{liOxln-rho}} \int_0^{R^*} \bct{\md{B_1}R^d \rho^*(z)}^2 \frac{dR}{R^{d+\gamma+1}} + \int_{R^*}^\infty \md{B_1} R^d \rho^*(z) \frac{dR}{R^{d+\gamma+1}} \\ &= \frac{\md{B_1}^2}{d-\gamma} (R^*)^{d-\gamma} \bct{\rho^*(z)}^2 + \frac{\md{B_1}}{\gamma} \bct{R^*}^{-\gamma} \rho^*(z),
\end{align*}
Taking $R^*= \bct{\md{B_1} \rho^*(z)}^{-\frac{1}{d}}$,  we derive that   
\begin{equation}\label{liOxln-p8}
\int_0^\infty \min\{f_R^2(z),f_R(z)\} \frac{dR}{R^{d+\gamma+1}} \leq \frac{d}{\gamma(d-\gamma)} \md{B_1}^{1+\frac{\gamma}{d}} \rho^*(z)^{1+\frac{\gamma}{d}} \mbox{ for } z\in \ro^d.
\end{equation}
By the theory of maximal functions, see e.g. \cite{StHar}, 
$$
\int_{\mR^d}\rho^*(z)^{1+\frac{\gamma}{d}} \, dz \le C \int_{\mR^d} \rho \psi (z)^{1+\frac{\gamma}{d}} \, dz, 
$$
it follows from \eqref{liOxln-p8} that 
\begin{equation}\label{liOxln-p9}
\int_{\ro^d}\int_0^\infty \min\{f_R^2(z),f_R(z)\} \frac{dR}{R^{d+\gamma+1}} \leq C_{LO} \int_{\ro^d} \rho\psi(z)^{1+\frac{\gamma}{d}} dz, 
\end{equation}
for some positive constant  $C_{LO}$ depending only on $d,\gamma$.

The conclusion now follows from \eqref{liOxln-p7} and \eqref{liOxln-p9}. The proof is complete. 
\qed

\providecommand{\bysame}{\leavevmode\hbox to3em{\hrulefill}\thinspace}
\providecommand{\MR}{\relax\ifhmode\unskip\space\fi MR }
\providecommand{\MRhref}[2]{%
  \href{http://www.ams.org/mathscinet-getitem?mr=#1}{#2}
}
\providecommand{\href}[2]{#2}

\end{document}